\newtheorem{definition}[equation]{Definition}
\newtheorem{thm}[equation]{Theorem}
\newtheorem{lemma}[equation]{Lemma}
\newtheorem{conj}[equation]{Conjecture}
\def\op#1{{\operatorname{#1}}}
\newcommand{\ring}[1]{\mathbb{#1}}
\def\deltalat{\mathbb\delta}  %_{\hbox{\scriptsize{lat}}}}
\def\delt{\delta_{\min}}
\def\rZ{{\ring{Z}}}
\def\rR{{\ring{R}}}
\def\rP{{\ring{P}}}
\def\aa{{\op{area}}}
\def\ta{{\tau}}
\def\mid{\,:\,}
\begin{document}

\title{On the Reinhardt Conjecture}
\author{Thomas C. Hales}
%\institute{University of Pittsburgh\\
\email{hales@pitt.edu} \thanks{Research supported by NSF grants
  0503447 and 0804189.  This research was conducted in 2007 at the
  Hanoi Math Institute.  I thank the members there for their
  hospitality.  These results were presented at a
  conference in honor of L. Fejes T\'oth in Budapest, June 2008.  I
  thank Nate Mays for turning my attention to this problem}.

\begin{abstract}  
  In 1934, Reinhardt asked for the centrally symmetric convex domain
  in the plane whose best lattice packing has the lowest density.  He
  conjectured that the unique solution up to an affine transformation
  is the smoothed octagon (an octagon rounded at corners by arcs of
  hyperbolas).  This article offers a detailed strategy of proof.  In
  particular, we show that the problem is an instance of the classical
  problem of Bolza in the calculus of variations.  A minimizing
  solution is known to exist.  The boundary of every minimizer is a
  differentiable curve with Lipschitz continuous
  derivative.  If a minimizer is piecewise analytic, then it is a
  smoothed polygon (a polygon rounded at corners by arcs of
  hyperbolas).  To complete the proof of the Reinhardt conjecture, the
  assumption of piecewise analyticity must be removed, and the
  conclusion of smoothed polygon must be strengthened to smoothed
  octagon.
\end{abstract}

\maketitle

%%%%%%%%%%%%%%%%%%%%%%

\centerline{to H\`a Huy Kho\'ai}

\section{Introduction}

{\narrower\it A contract requires a miser to make payment with a tray
  of identical gold coins filling the tray as densely as possible.
  The contract stipulates the coins to be convex and centrally
  symmetric.  What shape coin should the miser choose in order to part
  with as little gold as possible?}

Let $K$ be a centrally symmetric convex domain in the Euclidean plane.
If $\Lambda$ is a lattice such that the translates of $K$ under
$\Lambda$ have disjoint interiors, then the packing density of
$\Lambda+K$ is the ratio of the area of $K$ to the co-area of the
lattice $\Lambda$.  Let $\deltalat(K)$ be the maximum density of any
lattice packing of $K$.  A lattice realizing this density exists for
each $K$.
% K. Mahler, On star-regions in number geometry, Proc. Roy. Soc. London. 
% Ser. A vol. 187 (1946) pp. 151-187. Zentralblatt MATH: 0060.11710 ...
% Cited in Zassenhaus.
%(Reinhardt restricts his attention to centrally symmetric domains because of 
%Minkowski's observation that if $K'$ is an arbitrary convex domain with central 
% symmetrization $K$, then the set of translates of $K'$ under $\Lambda$ is a packing 
%ff the corresponding set of translates of $K$ is also.)

Let $\delt$ be the infimum of $\deltalat(K)$, as $K$ ranges over all
convex domains in the Euclidean plane.  Reinhardt proves that there
exists $K$ for which $\deltalat(K)=\delt$.  Reinhardt's problem is to
determine the constant $\delt$ and to describe $K$ explicitly for
which $\deltalat(K)=\delt$.

Reinhardt conjectured that $\deltalat(K) =\delt$ when $K$ is a
smoothed octagon.  The smoothed octagon is constructed by taking a
regular octagon and clipping the corners with hyperbolic arcs.  The
hyperbolic arcs are chosen so that the smoothed octagon has no
corners; that is, so that there is a unique tangent at each point of
the boundary.  The asymptotes of each hyperbola are lines extending
two sides of the regular octagon.  The density of the smoothed octagon
is
\[\deltalat(K) = \frac{8 - \sqrt{32} - \ln{2}}{\sqrt8 - 1}\approx 0.902414.\]

Reinhardt's original article contains many useful facts about his
conjecture.  The main facts from his article have been summarized in
Section~\ref{sec:rein}.  Beyond Reinhardt's article, various lower
bounds for $\delt$ have been published.  
See \cite{Juhasz:1983}, 
\cite{Mahler:1946}, \cite{Mahler:47a}, \cite{Mahler:47b}, \cite{Toth:1972:Lagerungen}, \cite{Tammela:1970},
\cite{Ennola-1961}.  The special case of centrally symmetric decagons is
considered in \cite{Ledermann:1949}.  Nazarov has proved the local optimality of
the smoothed octagon \cite{Nazarov}. The problem is discussed further in
\cite{Pach:1995}, where it is referred to as a ``famous conjecture.''  It is
also known that non-lattice packings of centrally symmetric convex
domains in the plane cannot have greater density than $\deltalat(K)$
\cite{Fejes-Toth:50}.  The Ulam conjecture, which is the corresponding
conjecture in three dimensions, posits the sphere as solution
\cite{Gardner:2001}.

In this article we take the following approach.  The boundary of any
optimal $K$ is a $C^1$ curve.  We express the boundary of $K$ in the
calculus of variations.  We will see that the problem is a special
instance of the classical problem of Bolza.  The circle is the unique
solution to the Euler-Lagrange equations (up to an affine
transformation), but an examination of second-order conditions shows
that the circle does not minimize.  This means that optimal $K$ is not
an interior point of the configuration space.

This leads to a study of the constraint that $K$ must be convex.
Piecewise analytic solutions have a well-defined (local) invariant,
called the rank.  Calculus of variations further reduces the problem
to the study of rank one.  We show that rank-one solutions $K$ are
structurally similar to the smoothed octagon.  In particular, $K$ is a
smoothed polygon, whose boundary consists of finitely many linear
segments connected by hyperbolic arcs.  We propose a nonlinear optimization
problem in a small number of variables over certain rank-one
configurations.  The successful solution of this nonlinear
optimization problem (and eliminating the assumption of piecewise
analyticity) would complete the proof of the Reinhardt conjecture.

\section{Reinhardt's Article}\label{sec:rein}

We give a brief review a series of lemmas in Reinhardt's original article.
The proofs are generally elementary.

\subsection{balanced hexagons}

\begin{definition}[balanced hexagon]
  We call a centrally symmetric hexagon $G$ a {\it balanced hexagon}
  of a centrally symmetric convex domain $K$ if $G$ contains $K$ and
  if the midpoint of each side of $G$ is a point on the boundary of
  $K$.  These six points on the boundary of $K$ are called the
  midpoints of $G$.
\end{definition}

\begin{lemma}\label{lemma:parallel} 
  Let $G$ be a balanced hexagon of a centrally symmetric convex domain
  $K$ without corners.  Then $G$ does not degenerate to a
  parallelogram.  That is, the six vertices are distinct \cite{Reinhardt:1934}.
\end{lemma}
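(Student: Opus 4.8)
\emph{Plan of proof.} I would argue by contradiction, extracting a corner of $K$ from the assumed degeneracy. The underlying mechanism is a general fact: a vertex of a circumscribing convex polygon that happens to lie on the boundary of the inscribed convex body must be a corner of that body, because the two edge-lines meeting at the vertex are distinct supporting lines of the body there.

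First a normalization and a combinatorial reduction. Place the centre of symmetry of $G$ at the origin and label its vertices cyclically $v_1,\dots,v_6$ with $v_{i+3}=-v_i$. I claim that if $G$ degenerates to a parallelogram then, after relabelling, a \emph{consecutive} pair of vertices coincides, say $v_1=v_2=:v$ (hence also $v_4=v_5=-v$), with $G$ the parallelogram on $v,v_3,-v,-v_3$. Indeed, a two-dimensional $G$ cannot have antipodal vertices equal, since $v_i=v_{i+3}$ forces $v_i=0$, which cannot lie on $\partial G$ as the centre is an interior point of a body with interior; and it cannot have ``skip-one'' vertices equal, since $v_i=v_{i+2}$ makes the boundary polyline double-cover a union of segments, leaving $G$ with empty interior and contradicting $K\subseteq G$. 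So merging a consecutive pair is the only available way for a centrally symmetric hexagon to collapse onto a parallelogram.

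Now the corner. Since the side $[v_1,v_2]$ of $G$ has shrunk to the single point $v$, its midpoint is $v$, and the balanced-hexagon hypothesis gives $v\in\partial K$. The edges of $G$ still incident to $v$ are $[v_6,v]$ and $[v,v_3]$, and they are not collinear: otherwise $v_6,v,v_3$ and their antipodes would all lie on one line and $G$ would again have empty interior. Thus the two lines carrying these edges are distinct lines through $v$; being edge-lines of the convex polygon $G$, each supports $G$ at $v$, and since $K\subseteq G$ with $v\in\partial K$ each also supports $K$ at $v$. Hence $K$ admits two distinct supporting lines at the boundary point $v$ — no unique tangent there — so $K$ has a corner at $v$, contradicting the hypothesis that $K$ has no corners. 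Therefore the six vertices are distinct and $G$ is not a parallelogram.

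I do not expect a serious obstacle here: once the shape of the degeneracy is pinned down, the contradiction is forced by convexity alone. The one place that wants careful writing is the preliminary combinatorial step — verifying that the only way a two-dimensional centrally symmetric hexagon becomes a parallelogram is by merging a consecutive pair of vertices, and that this merger genuinely deposits a vertex of $G$ on $\partial K$ via the collapsed side's midpoint. That bookkeeping, rather than the geometry, is the part I would be most careful about.
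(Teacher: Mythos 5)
Your argument is correct. Note that the paper itself supplies no proof of this lemma --- it is stated with a bare citation to Reinhardt's 1934 article --- so there is nothing internal to compare against; what you have written is a complete, self-contained justification, and it is the natural one: the collapsed side's midpoint is the collapsed vertex itself, the balanced-hexagon condition forces that vertex onto $\partial K$, and the two non-collinear edge-lines of $G$ through it are then two distinct supporting lines of $K$ at a boundary point, i.e.\ a corner. The combinatorial reduction is also sound: antipodal coincidence forces a vertex at the centre, and the ``skip-one'' coincidence $v_i=v_{i+2}$ is impossible because the boundary of a convex body with nonempty interior is a simple closed curve and cannot revisit $v_i$ (equivalently, the hexagon side $[v_{i+2},v_{i+3}]=[v_i,-v_i]$ would pass through the centre, an interior point of $K\subseteq G$). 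The only wording I would tighten is your claim that the skip-one case leaves $G$ with ``empty interior'': if $G$ is read as the convex hull of its vertices this is false (the hull is a genuine parallelogram); the case is excluded not because the hull is degenerate but because the prescribed cyclic vertex order cannot be realized by the boundary of any two-dimensional convex set. That is a matter of phrasing, not a gap.
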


\begin{lemma}\label{lemma:mid1} 
  Let $K$ be a centrally symmetric convex domain.  Each point of $K$
  is a midpoint of at most one balanced hexagon \cite[p.228]{Reinhardt:1934}.
\end{lemma}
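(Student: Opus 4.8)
The plan is to show that a balanced hexagon $G$ having $p$ as a midpoint is rigidly determined by $p$, in three steps: first locate the center of $G$, then its six midpoints, then $G$ itself. I will assume $K$ is smooth and strictly convex and that $G$ is a genuine hexagon with six distinct vertices; the general case is the main obstacle and is discussed at the end.

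First I would check that every balanced hexagon of $K$ is centered at the center $O$ of $K$; put $O$ at the origin. Since $K\subseteq G$ and the midpoint of each side of $G$ lies on $\partial K$, each side of $G$ lies on a support line of $K$ through its midpoint. The two support lines of $K$ with a prescribed pair of opposite outer normals are unique, and by central symmetry of $K$ each is the negative of the other; but the two opposite sides of $G$ with those normals are negatives of one another through the center $c$ of $G$. Comparing their midpoints shows $2c$ is parallel to each of the three side directions of $G$, which forces $c=0$.

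Next I would determine the midpoints. Call the side of $G$ through $p$ the side $s_1$, with midpoint $q_1=p$, and let $q_1,\dots,q_6$ be the midpoints of the six sides in cyclic order, so $q_{i+3}=-q_i$. For any centrally symmetric hexagon the midpoints satisfy $q_{i+1}=q_i-q_{i-1}$, so all six are determined by $q_1=p$ and $q_2$. Now $q_2\in\partial K$ and $q_3=q_2-p\in\partial K$, so $q_2\in\partial K\cap(p+\partial K)=\partial K\cap\partial(p+K)$. The bodies $K$ and $p+K$ overlap (their interiors share the point $p/2$) and neither contains the other, so their boundaries meet in exactly two points $a,b$; since the involution $x\mapsto p-x$ maps $\partial K\cap\partial(p+K)$ onto itself with no fixed point on it, $a+b=p$. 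Hence $q_2\in\{a,\ p-a\}$, and feeding either value into $q_{i+1}=q_i-q_{i-1}$ yields the same six midpoints $\{\pm p,\ \pm a,\ \pm(p-a)\}$ in cyclic orders that are reverses of one another. So the convex hexagon $H$ whose vertices are the midpoints of $G$ is uniquely determined by $p$.

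Finally I would recover $G$ from $H$. Each side $s_i$ of $G$ lies on the support line $M_i$ of $K$ at $q_i$, so the vertex of $G$ common to $s_{i-1}$ and $s_i$ must be the single point $M_{i-1}\cap M_i$ (consecutive sides of a hexagon have distinct directions). Thus every vertex of $G$ is determined by the midpoints $q_i$, hence by $p$, and at most one balanced hexagon has $p$ as a midpoint. The main obstacle is discarding the smoothness and strict convexity hypotheses: if some $q_i$ (or $p$ itself) is a corner of $K$, the support line $M_i$ is not unique and one must instead argue directly that $s_i$ is forced; if $\partial K$ and $\partial(p+K)$ overlap in a segment rather than meeting in two points, the candidates for $q_2$ form a continuum and one must show they still give a single hexagon; and one must rule out degeneration of $G$ (Lemma~\ref{lemma:parallel} handles this when $K$ has no corners) to justify the cyclic-order bookkeeping. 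A careful case analysis, or approximation of $K$ by smooth strictly convex bodies together with a compactness argument, should settle these.
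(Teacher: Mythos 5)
The paper offers no argument for this lemma --- it simply cites Reinhardt (p.~228) --- so there is no ``paper's proof'' to compare against, and your attempt has to be judged on its own. Under your standing hypotheses (smooth, strictly convex $K$, non-degenerate $G$) the three-step reduction is correct and nicely organized: the support-line argument forcing the center of $G$ to be $0$, the recursion $q_{i+1}=q_i-q_{i-1}$ reducing everything to $q_2\in\partial K\cap\partial(p+K)$, the chord/involution argument giving $a+b=p$ and hence the same midpoint set for either choice, and the recovery of the vertices of $G$ as intersections of consecutive support lines. Each of these steps checks out.

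The gap you flag at the end is, however, genuine and not a removable technicality in the context of this paper. Strict convexity and smoothness exclude precisely the domains the lemma is applied to: miserly domains are smoothed polygons, whose boundaries contain straight segments, and the lemma is invoked (via Lemma~\ref{lemma:mid-min}) for such $K$; it is even stated for arbitrary centrally symmetric convex domains, e.g.\ $K$ itself a hexagon, where the $q_i$ may be corners with a whole pencil of support lines. In those cases $\partial K\cap\partial(p+K)$ can contain segments, so $q_2$ is not pinned down by membership in $\partial K$ alone, and step 3 no longer determines the sides of $G$. The repair cannot come from your suggested approximation-plus-compactness argument: uniqueness is not a closed condition, so knowing that each smooth approximant has at most one balanced hexagon through (an approximation of) $p$ does not prevent $K$ itself from having two. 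What is actually missing is any use of the full midpoint condition --- that $q_i$ bisects the segment of $M_i$ cut off by $M_{i-1}$ and $M_{i+1}$ --- beyond the linear relations of Lemma~\ref{lemma:8G}; it is exactly this extra constraint that must be brought in to select a unique $q_2$ from a segment of candidates and a unique support line at a corner. Until that case analysis is carried out, the proof covers only a smooth, strictly convex subcase that misses the domains the lemma is actually needed for.
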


\begin{lemma}\label{lemma:8G} 
  Let $G$ be a balanced hexagon of a centrally symmetric convex domain
  $K$.  Assume that the center of symmetry of $K$ is the origin.  Let
  $u_j$, $j\in \ring{Z}/6\ring{Z}$, be the midpoints of the sides of
  $G$ listed in cyclic order around the hexagon.  Then
 \begin{itemize}
 \item $u_j+u_{j+2}+u_{j+4}=0$.
 \item $u_{j+3} = -u_j$.
 \item The area of $G$ is $4/3$ the area of the hexagon 
     $H$ formed by the convex hull of $\{u_j\}$.
  \item The six segments from the origin to the six midpoints $u_j$ breaks
    $H$ into six congruent triangles.  In particular, the area of $G$
    is $8$ times the area of the triangle $\{0,u_j,u_{j+2}\}$.
  \end{itemize}
\end{lemma}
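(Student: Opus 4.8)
\emph{Proof strategy.} The center of symmetry of $K$ is at the origin by hypothesis; I would label the vertices of $G$ in cyclic order as $v_0,\dots,v_5$ (indices modulo $6$) and set $u_j=(v_j+v_{j+1})/2$ for the midpoint of the side $[v_j,v_{j+1}]$. Central symmetry of $G$ supplies a center $c$ with $v_{j+3}=2c-v_j$, hence $u_{j+3}=2c-u_j$, so the second bullet is precisely the assertion that $c=0$, i.e.\ that $G$ is concentric with $K$. I would prove this first, since it is the one step that uses convex geometry rather than formal manipulation. The argument I have in mind: because $K\subseteq G$ and the midpoint $u_j\in\partial K$ lies on the line $\ell_j$ carrying the side $[v_j,v_{j+1}]$, and $\ell_j$ supports $G$ and hence $K$, the line $\ell_j$ is a supporting line of $K$. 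Opposite sides of the centrally symmetric hexagon $G$ are parallel, so $\ell_j$ and $\ell_{j+3}$ are the two supporting lines of $K$ parallel to the side $[v_j,v_{j+1}]$, and central symmetry of $K$ forces the contact sets $\ell_j\cap K$ and $\ell_{j+3}\cap K$ to be negatives of one another. Since $u_{j+3}\in\ell_{j+3}\cap K$ and $u_{j+3}=2c-u_j$, we get $u_j-2c=-u_{j+3}\in-(\ell_{j+3}\cap K)=\ell_j\cap K\subseteq\ell_j$; together with $u_j\in\ell_j$ this forces $2c$ to be parallel to $\ell_j$, and this holds for every $j$. A convex hexagon with nonempty interior --- which $G$ has, since it contains the domain $K$ --- has sides in at least two distinct directions, so $2c=0$; this is the second bullet.

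For the first bullet, once $v_{j+3}=-v_j$ is known we have $\sum_{k=0}^{5}v_k=\sum_{k=0}^{2}(v_k+v_{k+3})=0$, whence $u_0+u_2+u_4=\tfrac12\sum_k v_k=0$ and likewise $u_1+u_3+u_5=0$. Combining the two bullets yields the period-six recursion $u_{j+1}=u_j-u_{j-1}$; equivalently, writing $e_1=u_0$ and $e_2=u_1$, one has $(u_0,\dots,u_5)=(e_1,\,e_2,\,e_2-e_1,\,-e_1,\,-e_2,\,e_1-e_2)$, which exhibits $H$ as the image of a regular hexagon under the linear map $f_i\mapsto e_i$ --- an affinely regular hexagon. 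I would remark that $G$, containing a two-dimensional domain, is nondegenerate, hence so is $H$; thus the six midpoints are in convex position in the listed cyclic order, $0$ lies in the interior of $H$, and the six radial segments $[0,u_j]$ genuinely dissect $H$ into the triangles $\{0,u_j,u_{j+1}\}$, $j=0,\dots,5$.

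The remaining two bullets I would settle by a short area computation with the cross product $v\times w=v_xw_y-v_yw_x$. The shoelace formula combined with $v_{j+3}=-v_j$ collapses $\aa(G)$ to $v_0\times v_1+v_1\times v_2+v_0\times v_2$, while $u_0\times u_1=\tfrac14(v_0+v_1)\times(v_1+v_2)=\tfrac14(v_0\times v_1+v_0\times v_2+v_1\times v_2)$, so each triangle $\{0,u_j,u_{j+1}\}$ has area $\tfrac18\aa(G)$, and hence $\aa(H)=6\cdot\tfrac18\aa(G)=\tfrac34\aa(G)$, which is the third bullet. For the fourth, the point reflection $x\mapsto u_{j+1}-x$ through the midpoint of $[0,u_{j+1}]$ is an isometry carrying $\{0,u_j,u_{j+1}\}$ onto $\{0,u_{j+1},u_{j+2}\}$ (using $u_{j+2}=u_{j+1}-u_j$), so the six triangles are congruent; and $u_j\times u_{j+2}=u_j\times(u_{j+1}-u_j)=u_j\times u_{j+1}$, so the triangle $\{0,u_j,u_{j+2}\}$ also has area $\tfrac18\aa(G)$, giving $\aa(G)=8\,\aa(\{0,u_j,u_{j+2}\})$. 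I expect the concentricity step in the first paragraph to be the only genuine obstacle; granting it, the rest reduces to linear algebra and the shoelace formula.
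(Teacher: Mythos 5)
Your proof is correct, and for the first two bullets it follows the same skeleton as the paper: write $u_j=(v_j+v_{j+1})/2$ and use $v_{j+3}=-v_j$. The difference is that you prove two things the paper only asserts or outsources. First, the paper simply writes the vertices as $w_j$ with $w_{j+3}=-w_j$, silently assuming that the center of the balanced hexagon $G$ coincides with the center of $K$; you correctly isolate this as the one genuinely geometric step and your supporting-line argument for it is sound (each side line $\ell_j$ supports $K$ because it supports $G\supseteq K$ and meets $K$ at $u_j$; central symmetry of $K$ about the origin forces $-\ell_j=\ell_{j+3}$ and hence $u_j-2c\in\ell_j$, so $2c$ is parallel to every side direction and must vanish). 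The only care needed there is that a degenerate side of $G$ has no well-defined carrying line, but since $G$ contains the two-dimensional domain $K$ it always has nondegenerate sides in at least two directions, so the conclusion stands. Second, for the two area statements the paper gives no argument at all and cites Reinhardt's 1934 article; your shoelace computation
$u_j\land u_{j+1}=\tfrac14\bigl(v_j\land v_{j+1}+v_j\land v_{j+2}+v_{j+1}\land v_{j+2}\bigr)=\tfrac14\aa(G)$,
combined with the recursion $u_{j+2}=u_{j+1}-u_j$ (which gives both the congruence of the six triangles via the point reflection $x\mapsto u_{j+1}-x$ and the equality $u_j\land u_{j+2}=u_j\land u_{j+1}$), is a clean, self-contained replacement for that citation. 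So: same approach where the paper argues, and a complete elementary filling-in where it does not.
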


\begin{proof} If the vertices of the centrally symmetric hexagon $G$
  are $w_j$, with $w_{j+3} = - w_j$, then
\[
u_j = (w_j+w_{j+1})/2.
\]
The first two statements are then immediate.  The other statements appear in 
\cite[p.219,p.222]{Reinhardt:1934}.
\end{proof}

\subsection{miserly domains}

\begin{lemma}  There exists a centrally symmetric convex domain
$K$ for which $\deltalat(K) = \delt$.
\end{lemma}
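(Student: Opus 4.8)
The plan is a compactness argument in the style of the geometry of numbers: produce a compact family of centrally symmetric convex domains that contains a minimizing sequence, pass to a Hausdorff limit, and check that $\deltalat$ does not jump up in the limit.

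First I would normalize. Since $\deltalat$ is unchanged by an invertible affine map of the plane (such a map scales $\op{area}(K)$ and the co-area of an admissible lattice by the same factor), John's theorem lets us replace each centrally symmetric convex domain by an affine image $K$ with $B\subseteq K\subseteq cB$, where $B$ is the closed unit disk about the origin and $c$ is an absolute constant (for symmetric planar bodies one may take $c=\sqrt2$). Choose a minimizing sequence $K_n$, so normalized, with $\deltalat(K_n)\to\delt$. By the Blaschke selection theorem, after passing to a subsequence, $K_n\to K$ in the Hausdorff metric; the limit $K$ is convex, centrally symmetric, and still satisfies $B\subseteq K\subseteq cB$, hence is a genuine convex domain. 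Since $\delt$ is an infimum we automatically have $\deltalat(K)\ge\delt$, so it remains only to prove $\deltalat(K)\le\delt$.

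For that I would use a one-sided semicontinuity estimate. Hausdorff convergence gives $K_n\subseteq K+\varepsilon_n B$ with $\varepsilon_n\to0$, and since $B\subseteq K$ this yields $K_n\subseteq K+\varepsilon_n K=(1+\varepsilon_n)K$. Let $\Lambda$ be a lattice attaining $\deltalat(K)$ (such a lattice exists for each fixed domain, as asserted in the text), so the translates $K+\lambda$, $\lambda\in\Lambda$, have pairwise disjoint interiors and $\op{area}(K)/\op{covol}(\Lambda)=\deltalat(K)$. Disjointness of the interiors of all translates is equivalent to $\Lambda\cap\op{int}(K-K)=\{0\}$, which here is $\Lambda\cap\op{int}(2K)=\{0\}$ by central symmetry. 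From $2K_n\subseteq(1+\varepsilon_n)(2K)$ it follows that $(1+\varepsilon_n)\Lambda\cap\op{int}(2K_n)\subseteq(1+\varepsilon_n)\bigl(\Lambda\cap\op{int}(2K)\bigr)=\{0\}$, so the rescaled lattice $(1+\varepsilon_n)\Lambda$ is admissible for $K_n$. Therefore
\[
\deltalat(K_n)\ \ge\ \frac{\op{area}(K_n)}{\op{covol}\bigl((1+\varepsilon_n)\Lambda\bigr)}\ =\ \frac{\op{area}(K_n)}{(1+\varepsilon_n)^2\,\op{covol}(\Lambda)}.
\]
Letting $n\to\infty$, and using that area is continuous along Hausdorff-convergent convex bodies so that $\op{area}(K_n)\to\op{area}(K)$, the right side tends to $\op{area}(K)/\op{covol}(\Lambda)=\deltalat(K)$. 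Hence $\delt=\lim_n\deltalat(K_n)\ge\deltalat(K)$, which together with $\deltalat(K)\ge\delt$ gives $\deltalat(K)=\delt$.

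The genuine inputs I would cite rather than prove are standard: the Blaschke selection theorem, John's theorem for the affine normalization, and the fact (stated in the text) that for each fixed $K$ the maximum defining $\deltalat(K)$ is attained, itself a Mahler-type compactness statement for lattices of bounded co-area with no nonzero point in $\op{int}(2K)$. The only point requiring care in the argument itself is the direction of the perturbation: one must \emph{lower}-bound $\deltalat(K_n)$ by rescaling the optimal lattice \emph{of the limit} $K$ — since $K_n$ may protrude beyond $K$, the optimal lattice of $K_n$ cannot be expected to pack $K$, and transporting it into the limit would only yield the already-known inequality $\deltalat(K)\ge\delt$.
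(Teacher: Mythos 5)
Your argument is correct and takes essentially the same route as the paper, whose proof is simply a citation of Blaschke's selection lemma (via Reinhardt): a compactness argument on a normalized minimizing sequence. You have supplied the details the paper leaves to the reference---the affine normalization, the Hausdorff limit, and in particular the semicontinuity step obtained by rescaling the optimal lattice of the limit body by $(1+\varepsilon_n)$ so that it is admissible for $K_n$---and these are all sound.
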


\begin{proof} This follows by Blaschke's selection lemma \cite[p.220]{Reinhardt:1934}.
\end{proof}
\begin{definition}[miserly domain]
  Any centrally symmetric convex domain $K$ that realizes the lower
  bound $\deltalat(K) = \delt$ is called a {\it miserly
    domain}. %was sparse
\end{definition}

\begin{lemma}\label{lemma:221} 
  If $K$ is a miserly domain, then it has no corners.  That is, there
  is a unique tangent through each point on the boundary.
  \cite[p.221]{Reinhardt:1934}
\end{lemma}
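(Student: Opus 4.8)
The plan is to proceed by contradiction: assuming $K$ is a miserly domain with a corner, I construct a centrally symmetric convex domain $K'$ with $\deltalat(K')<\deltalat(K)=\delt$. I use the classical fact that $\deltalat(K)$ equals $\aa(K)$ divided by the least area $\HH(K)$ of a centrally symmetric hexagon containing $K$ (equivalently, of a balanced hexagon of $K$, in the sense of Section~\ref{sec:rein}), together with the fact that a containing hexagon of least area meets $\partial K$ at the midpoint of each of its six sides. Fix a corner $p$ of $K$; by central symmetry $-p$ is also a corner. Let $G$ be a least-area containing hexagon, with side-midpoints $u_0,\dots,u_5\in\partial K$.

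The first, easy case is when $p$ is not the midpoint of a side of any least-area containing hexagon of $K$. Then I shave the corners at $p$ and $-p$ off by a pair of short symmetric chords, obtaining a centrally symmetric convex $K_t\subsetneq K$ with $\aa(K_t)<\aa(K)$. For $t$ small the two shaved pieces avoid all the $u_j$, so $G$ still contains $K_t$ and still touches $\partial K_t$ at all six midpoints, whence $\HH(K_t)\le\aa(G)=\HH(K)$. In fact $\HH(K_t)=\HH(K)$: a centrally symmetric hexagon containing $K_t$ with strictly smaller area could not contain $K$, so one of its sides would separate a piece of $K$ near $p$ or $-p$ from $K_t$, and a Blaschke limit of such hexagons as $t\to0$ would be a least-area containing hexagon of $K$ one of whose side-midpoints is $p$ --- contradicting the hypothesis of this case. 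Hence $\deltalat(K_t)=\aa(K_t)/\HH(K)<\aa(K)/\HH(K)=\deltalat(K)$, contradicting miserliness.

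The hard case --- and the main obstacle --- is when $p$ is the midpoint of a side $S$ of some least-area containing hexagon $G$. Now the crude shave fails: removing the corner tip lets $S$ slide inward, so $\HH$ drops and the comparison is lost. Geometrically, $S$ is a side along which $K$ meets, in the corresponding lattice packing, a congruent corner of a neighbouring copy of $K$; the two corners touch at $p$ and leave an angular gap $2\pi-2\alpha>0$ there, where $\alpha<\pi$ is the interior angle of the corner. The plan is to exploit this gap by replacing a neighbourhood of $p$ (and of $-p$) in $\partial K$ by a short hyperbolic arc whose asymptotes carry the two boundary edges at the corner --- the very construction underlying the smoothed octagon --- and then to check, by an elementary but delicate local computation, that this strictly decreases $\aa(K)/\HH(K)$. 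The point of using a hyperbola is that the tangent line to such an arc in any given direction cuts off from the corner a triangle of one and the same (small) area, which is exactly the condition under which the area functional on containing hexagons is not improved as the contact point runs along the new arc; granting that $\HH$ is thereby left unchanged while $\aa(K)$ strictly decreases, one gets $\deltalat(K')<\delt$, the required contradiction. I expect that controlling $\HH$ for the smoothed domain is the one genuinely delicate point of the lemma, and it is here that the hyperbolic arcs of the conjectural minimiser first appear.

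Finally, the degenerate configurations --- $G$ collapsing to a parallelogram, or $p$ coinciding with some $u_j$, $j\ne0$ --- are handled by applying the same two constructions to whichever contact structure actually occurs, and raise only bookkeeping issues.
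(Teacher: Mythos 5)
The paper offers no proof of Lemma~\ref{lemma:221}: it simply cites Reinhardt's article, p.~221. Your overall strategy---contradict minimality by clipping the corner at $p$ (and at $-p$) with a hyperbolic arc whose asymptotes carry the two boundary edges, using the identity $\deltalat(K)=\aa(K)/\HH(K)$ with $\HH(K)$ the least area of a circumscribed centrally symmetric hexagon---is consonant with Reinhardt's own construction (the same hyperbolas reappear in the proof of Lemma~\ref{lemma:s0:lip}). But as written the argument has a genuine gap at precisely the point you flag and then waive: the claim that the clipping leaves $\HH$ unchanged, or---what would in fact suffice, since $\deltalat(K)<1$---that $\HH$ decreases by no more than the area removed, is asserted rather than proved, and it is the entire content of the lemma. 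The heuristic you offer does not close it: a least-area hexagon of the clipped body $K'$ need not have any side tangent to the new arc, and even when it does, the equal-tangent-triangle property of the hyperbola only says that the tangent line cuts a triangle of fixed area off the \emph{corner}; it does not by itself manufacture a centrally symmetric hexagon of comparable area containing the original $K$, which is what a lower bound on $\HH(K')$ in terms of $\HH(K)$ requires. Without that comparison there is no contradiction.

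A second, smaller gap is that your case dichotomy is not exhaustive. The Blaschke-limit argument in your first case only produces a least-area hexagon of $K$ one of whose side-lines supports $K$ at $p$; it does not force $p$ to be the \emph{midpoint} of that side, so no contradiction with the hypothesis of that case is reached. And the intermediate situation is real: if one of the two edges meeting at the corner is a straight segment of $\partial K$ lying along a side of a fitting hexagon (as at every vertex of a regular hexagon), then $p$ lies on a side of a least-area hexagon without being its midpoint, and neither of your two cases applies as stated. The dichotomy should instead be on whether some side of a least-area hexagon supports $K$ at $p$, and the hard case must then handle contact at $p$ that is not midpoint contact.
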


\begin{lemma}\label{lemma:mid-min} %\hbox{ }
\begin{itemize}
\item Let $K$ be a centrally symmetric domain without corners.  Assume
  that for each point $p$ on the boundary of $K$ there exists a
  balanced hexagon $G_p$ on which $p$ is a midpoint.  Assume further
  that the area of $G_p$ is independent of $p$.  Then $K$ has no
  other balanced hexagons and
\begin{equation}\label{eqn:density}
\deltalat(K) = \aa(K)/\aa(G)
\end{equation}
for every balanced hexagon $G$ of $K$.
\item If $K$ is any miserly domain, then it satisfies the assumptions
  of the first part of the lemma.
\end{itemize}
\end{lemma}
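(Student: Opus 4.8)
The plan is to reduce the lemma to two classical facts about lattice packings of a centrally symmetric convex domain $K$, both present in Reinhardt's article \cite{Reinhardt:1934}. First, $\deltalat(K)=\aa(K)/h(K)$, where $h(K)$ denotes the least area of a centrally symmetric hexagon containing $K$: a centrally symmetric hexagon tiles the plane under its own tiling lattice $\Lambda$, so if it contains $K$ then $\Lambda+K$ is a packing of density $\aa(K)/\aa(\text{hexagon})$, while conversely every lattice packing of $K$ can be completed to a tiling of the plane by a centrally symmetric hexagon containing $K$. Second, if $K$ has no corners, then a centrally symmetric hexagon of least area containing $K$ is a nondegenerate balanced hexagon of $K$ (the nondegeneracy by Lemma~\ref{lemma:parallel}): were one of its sides to meet $K$ at a point other than its midpoint, pivoting that side and its opposite about their contact points, together with a slight outward translation to preserve containment, would decrease the area to first order.

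For the first assertion I would write $a_0$ for the common value of $\aa(G_p)$ and argue as follows. If $G$ is any balanced hexagon of $K$ and $p$ is one of its midpoints, then $p$ lies on the boundary of $K$ and $G=G_p$ by Lemma~\ref{lemma:mid1}; hence $K$ has no balanced hexagons other than the $G_p$, and every balanced hexagon of $K$ has area $a_0$. A centrally symmetric hexagon of least area containing $K$ is balanced by the second fact, so it has area $a_0$; thus $h(K)=a_0$, and the first fact gives $\deltalat(K)=\aa(K)/a_0=\aa(K)/\aa(G)$ for every balanced hexagon $G$ of $K$, which is \eqref{eqn:density}.

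For the second assertion, let $K$ be miserly; by Lemma~\ref{lemma:221} it has no corners, so both facts apply, and putting $a_0=h(K)$ we have $\delt=\deltalat(K)=\aa(K)/a_0$. I would fix a boundary point $p$. Since $K$ has no corners, $p$ is not a vertex of any centrally symmetric hexagon containing $K$, so $p$ lies on such a hexagon's boundary precisely when $p$ is the contact point of one of its sides; let $F(p)$ be the least area of a centrally symmetric hexagon that contains $K$ and has $p$ on its boundary, so $F(p)\ge h(K)=a_0$. It suffices to show $F(p)=a_0$, for then a hexagon realizing $F(p)$ is a least-area hexagon containing $K$, hence balanced with $p$ the midpoint of one of its sides, and this supplies the required balanced hexagon $G_p$, of area $a_0$ independent of $p$. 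Suppose instead $F(p)>a_0$. I would remove from $K$ two small caps, one near $p$ and one near $-p$, obtaining a centrally symmetric convex $K'\subseteq K$ with $\aa(K')<\aa(K)$; monotonicity of $h$ gives $h(K')\le a_0$, and I claim in fact $h(K')=a_0$. Otherwise a centrally symmetric hexagon containing $K'$ of area less than $a_0$ would fail to contain $K$ (since $h(K)=a_0$), hence would cut into $K$ inside one of the caps; letting the caps shrink to $p$ and $-p$ and passing to a Hausdorff limit would then produce a centrally symmetric hexagon containing $K$, of area $a_0$ and so balanced, with $p$ on its boundary, contradicting $F(p)>a_0$. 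But then $\deltalat(K')=\aa(K')/a_0<\aa(K)/a_0=\delt$, contradicting the definition of $\delt$. Hence $F(p)=a_0$ for every boundary point $p$, which is what is needed.

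The hard part is this perturbation step: one must check carefully that denting $K$ near a boundary point where $F(p)>a_0$ truly leaves $h(K)$ unchanged---equivalently, that the least-area circumscribed hexagons of the dented domains do not lose area but instead converge to a least-area (hence balanced) circumscribed hexagon of $K$ passing through $p$. A secondary technical point is the variational argument behind the second classical fact, where the outward translation accompanying the pivot must be chosen so that the first-order change of area is still strictly negative (it turns out to be of second order in the pivot angle), with Lemma~\ref{lemma:parallel} invoked to rule out the degenerate parallelogram and the case of a flat boundary arc handled by shrinking the side onto that arc.
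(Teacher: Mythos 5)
Your handling of the first bullet and your two ``classical facts'' track the paper's own proof, which likewise defers both facts to Reinhardt: a least-area circumscribed centrally symmetric hexagon exists, computes $\deltalat(K)$, and is balanced; Lemma~\ref{lemma:mid1} then forces every balanced hexagon to coincide with some $G_p$, and \eqref{eqn:density} follows. The denting argument for the second bullet is your own, and its dichotomy is sound as far as it goes: if $F(p)>a_0$ then either some dented $K'$ has $h(K')=a_0$, whence $\deltalat(K')=\aa(K')/a_0<\delt$ contradicts the definition of $\delt$, or else a Hausdorff limit of sub-$a_0$ hexagons over shrinking caps produces a least-area, hence balanced, circumscribed hexagon with $p$ on its boundary.

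The gap is the clause ``hence balanced with $p$ the midpoint of one of its sides.'' A balanced hexagon is one whose side-\emph{midpoints} lie on $\partial K$; a boundary point of $K$ lying on a side of a balanced hexagon need not be that side's midpoint. Your argument does give the conclusion when the tangent line at $p$ meets $K$ only at $p$ (then the side through $p$ is that tangent line, it meets $K$ only at $p$, and the midpoint, being in $\partial K$ and on the side, must equal $p$). But absence of corners does not exclude line segments in $\partial K$ --- the smoothed octagon has them --- and for $p$ interior to such a segment the side through $p$ covers the whole segment while its midpoint may be a different point of it. So your denting argument establishes only Reinhardt's first step (every boundary point \emph{lies on} a fitting balanced hexagon); the paper explicitly separates the second step (``each boundary point of $K$ is in fact the \emph{midpoint} of a balanced hexagon that is also a fitting hexagon''), which Reinhardt proves by a continuity/intermediate-value argument on the rotating one-parameter family of fitting hexagons, not by a perturbation of $K$. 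Without that step you have not produced the hexagons $G_p$ demanded by the hypotheses of the first bullet at points of flat arcs, so the second bullet remains unproved. (Secondary, but worth flagging: your sketch of the fact that a least-area circumscribed hexagon is balanced is also only a first-order heuristic; the paper simply cites Reinhardt for it, which is the safer course here.)
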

\begin{proof} The facts asserted without proof in this proof appear in
  \cite[pp.219--222]{Reinhardt:1934}.  Let $K$ be a centrally symmetric convex
  domain in the plane.  Let $G$ be a smallest centrally symmetric
  hexagon that contains $K$.  Such a hexagon exists, and
  $\deltalat(K)$ equals the ratio of the area of $K$ to that of
  $G$. % \cite[p.219]{Reinhardt:1934}.
  Call any such hexagon a {\it fitting} hexagon.  Every fitting
  hexagon of $K$ is a balanced hexagon. % \cite[p.219]{Reinhardt:1934}.
  By Lemma~\ref{lemma:mid1}, there are no balanced hexagons other than
  the $G_p$.  Hence $G=G_p$ for some $p$.  The first part of the lemma
  now follows.

  Now let $K$ be a miserly domain.  Reinhardt proves that each
  boundary point $p$ of $K$ lies on a balanced hexagon that is also a
  fitting hexagon of $K$, although $p$ is not necessarily a midpoint
  of the balanced hexagon.  % \cite[p.221]{Reinhardt:1934}.
  Next, he shows that each boundary point of $K$ is in fact the
  midpoint of a balanced hexagon that is also a fitting hexagon of
  $K$. % \cite[p.222]{Reinhardt:1934}.
  Since there are no other balanced hexagons, there are no other
  fitting hexagons.  The set of balanced hexagons coincides with the
  set of fitting hexagons.  All fitting hexagons have the same area.
  Thus, the assumptions of the first part of the lemma are all
  satisfied for a miserly domain.
\end{proof}

\section{the Boundary Curve}

\subsection{hexameral domains}

If we combine the properties of miserly domains that were established
by Reinhardt, we are led to the following definition.

\begin{definition}[hexameral domain]
We say that $K$ is a hexameral domain if the following conditions hold.
\begin{itemize}
\item $K$ is a centrally symmetric domain whose center of symmetry is the origin.
\item $K$ has no corners.
\item Each point on
the boundary of $K$ is a midpoint of a balanced
hexagon $G$.  Moreover, these balanced hexagons all have the same area.
\end{itemize}
\end{definition}

By the preceding lemmas, if $K$ is a miserly domain, then (after
recentering at the origin) it is a hexameral domain.  The packing density
$\deltalat(K)$ of a hexameral domain is computed by
Formula~(\ref{eqn:density}) and Lemma~\ref{lemma:8G}.  The smoothed
octagon and the circle are examples of hexameral domains.  The class
of hexameral domains is much larger than the class of miserly domains.
We consider the optimization problem of determining the miserly domains
within the class of hexameral domains.  If $K$ is a hexameral domain,
then each point of the boundary is a midpoint of a {\it unique}
balanced hexagon.

Let $K$ be a hexameral domain.  Give a continuous parametrization
$t\mapsto\sigma_0(t)$ of the boundary curve.  We follow the convention
of parametrizing the boundary in a counterclockwise direction.  Since
$K$ has no corners, we may assume that $\sigma_0$ is $C^1$.  At each
time $t$, there is a uniquely determined balanced hexagon with
midpoint $\sigma_0(t)$. Let the other midpoints be listed in
(counterclockwise) order as $\sigma_j(t)$, $j\in\rZ/6\rZ$.

If $u$ and $v$ are ordered pairs of real numbers, write
$u\land v$ for the $2\times 2$ determinant with columns $u$ and $v$.

\begin{lemma}\label{lemma:sigma2:c1} 
Let $K$ be a hexameral domain with $C^1$ boundary
parametrization $\sigma_0$.  Then the curves $\sigma_2$ and $\sigma_4$
are also $C^1$ parametrizations of the boundary, oriented in the same
way as $\sigma_0$.
\end{lemma}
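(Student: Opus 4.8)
The key structural fact to exploit is the linear relation from Lemma~\ref{lemma:8G}: for the balanced hexagon with midpoints $\sigma_j(t)$ we have $\sigma_0(t)+\sigma_2(t)+\sigma_4(t)=0$ and $\sigma_{j+3}=-\sigma_j$. Thus it suffices to understand $\sigma_2$, since $\sigma_4=-\sigma_0-\sigma_2$ will automatically inherit whatever regularity $\sigma_0$ and $\sigma_2$ have, and the remaining three midpoints are just negatives. So the whole lemma reduces to: (i) $\sigma_2$ is a $C^1$ parametrization of $\partial K$, and (ii) it is positively (counterclockwise) oriented.

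Let me outline how I would prove $\sigma_2$ is $C^1$. First I would note that $\sigma_2(t)$ lies on $\partial K$ by definition of a balanced hexagon, so $t\mapsto\sigma_2(t)$ is at least a continuous map into $\partial K$ (continuity of the hexagon as a function of the basepoint follows from uniqueness of the balanced hexagon through each point, which is part of the hexameral hypothesis, plus a compactness/closed-graph argument). To get differentiability I would set up the defining equations of the balanced hexagon explicitly. At the midpoint $\sigma_2(t)$ the side of $G$ is tangent to $K$; since $K$ has no corners this tangent direction is well-defined, and it must be parallel to the chord direction determined by the two adjacent vertices $w_1=(\sigma_0+\sigma_2)$-type combinations. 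Concretely, using $u_j=(w_j+w_{j+1})/2$ one inverts to express the vertices $w_j$ in terms of the $\sigma_j$, and the tangency condition at each of the six midpoints becomes: the outward normal to $K$ at $\sigma_j(t)$ is perpendicular to $w_j - w_{j-1} = \sigma_j - \sigma_{j-1}$ (up to the appropriate indexing). Writing $n(p)$ for the unit outward normal at $p\in\partial K$, the balanced-hexagon condition is the system $(\sigma_{j+1}-\sigma_{j-1})\land n(\sigma_j)=0$ together with the linear relations above. This is a system of smooth equations in the $\sigma_j$ once we know $n$ is continuous — but $n$ need only be continuous a priori (it is Lipschitz on an optimal $K$, but the lemma is stated for general hexameral $K$), so I would instead argue via the parametrization directly.

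Here is the approach I would actually carry out. Parametrize $\partial K$ by a fixed $C^1$ map $s\mapsto\gamma(s)$ with nonvanishing derivative (e.g. arc length). Write $\sigma_0(t)=\gamma(a_0(t))$ and seek $\sigma_2(t)=\gamma(a_2(t))$ for a function $a_2$. The balanced-hexagon relations, after eliminating the vertices, give finitely many scalar equations of the form $F_k(a_0,\dots,a_4)=0$ where each $F_k$ is built from $\gamma$, $\gamma'$ and wedge products — hence $C^0$ in all arguments and, crucially, continuously differentiable in the $a_j$ wherever $\gamma'$ is continuous, because the only nonsmooth ingredient would be the normal direction, and the normal direction of a $C^1$ curve varies continuously. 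The system has a unique solution $a_2(t),a_4(t)$ for each $a_0(t)$ (uniqueness of the balanced hexagon). To upgrade continuity of $a_2$ to $C^1$, I would apply the implicit function theorem, for which the obstruction — and I expect this to be the main difficulty — is showing the relevant Jacobian is nonsingular. Nonsingularity should follow from Lemma~\ref{lemma:parallel} (the hexagon does not degenerate to a parallelogram, so the six vertices are in "general position" and no two of the tangency constraints become linearly dependent) combined with strict convexity-type input; if the IFT hypothesis genuinely fails at isolated points one falls back on the fact that a monotone continuous function between two $C^1$ curves, with the correspondence pinned down by a nondegenerate geometric incidence, is forced to be $C^1$. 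Finally, for the orientation claim: as $t$ increases $\sigma_0$ moves counterclockwise; the balanced hexagon then rotates in the same sense (its vertices cannot reverse direction without the hexagon passing through a degenerate parallelogram configuration, again contradicting Lemma~\ref{lemma:parallel}), so $\sigma_2$ and hence $\sigma_4=-\sigma_0-\sigma_2$ also traverse $\partial K$ counterclockwise. The hardest step is the Jacobian nonvanishing for the implicit function theorem; everything else is bookkeeping with the determinant identities of Lemma~\ref{lemma:8G} and continuity of the Gauss map of a corner-free convex curve.
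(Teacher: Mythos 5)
There is a genuine gap in your plan, and it is not where you locate it. You say the main difficulty is showing the Jacobian of the balanced-hexagon system is nonsingular; the real obstruction is that this Jacobian need not \emph{exist}. Your defining equations $F_k$ encode the tangency of the hexagon's sides at the midpoints, so they necessarily involve $\gamma'(a_j)$ (equivalently the normal $n(\gamma(a_j))$). You assert that these $F_k$ are ``continuously differentiable in the $a_j$ wherever $\gamma'$ is continuous,'' but that is false: differentiating $F_k$ with respect to $a_j$ produces $\gamma''(a_j)$, and the boundary of a hexameral domain is only assumed $C^1$ (the Lipschitz bound on $\sigma_0'$ is proved later, and only for miserly domains). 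So the implicit function theorem cannot be applied to your system, and the fallback you offer (``a monotone continuous function \dots is forced to be $C^1$'') is not an argument.

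The paper's proof sidesteps exactly this issue by never differentiating the tangency conditions. It uses only the single conserved scalar $\sigma_0(t)\land\sigma_2(t)=\mathrm{const}$ from Lemma~\ref{lemma:8G}, together with the fact that the unit tangent direction $n_2(t)$ at $\sigma_2(t)$ is already known and continuous because $K$ has no corners. Writing $\sigma_2'=v_2\,n_2$ and differentiating the conserved quantity gives the linear equation $\sigma_0'\land\sigma_2+(\sigma_0\land n_2)\,v_2=0$, which determines the speed $v_2$ as a continuous function once one knows $\sigma_0(t)\land n_2(t)\neq 0$. That nondegeneracy is the analogue of your Jacobian condition, but it is proved from the no-corners hypothesis (a vanishing wedge would force the tangent lines at $\sigma_0$ and $\sigma_2$ to lie along edges of the midpoint hexagon and create a corner at $\sigma_1$), not from Lemma~\ref{lemma:parallel}. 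If you want to repair your write-up, replace the full IFT system by this one scalar identity: it is the only equation you need, and it is the only one whose differentiation stays within the $C^1$ regularity actually available.
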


\begin{proof} Reinhardt shows that the boundary parametrizations
  $\sigma_2,\sigma_4$ are continuous if $\sigma_0$ is continuous, and
  that they are oriented in the same way as $\sigma_0$
  \cite[p.222]{Reinhardt:1934}.  Let us check that $\sigma_2$ is $C^1$, whenever
  $\sigma_0$ is.  Since $K$ has no corners, the unit tangent $n_2(t)$
  to $\sigma_2(t)$, with the orientation given by $\sigma_0$, is a
  continuous function of $t$.  It is enough to check that the speed of
  $\sigma_2$ is continuous in $t$.  By  Lemma~\ref{lemma:8G},
  ${\sigma_0(t)}\land{\sigma_2(t)}$ is a fixed fraction of the area of
  the balanced hexagon, and does not depend on $t$.

  We claim that ${\sigma_0(t)}\land{n_2(t)}\ne 0$.  Let $H(t)$ be the
  hexagon given by the convex hull of $\{\sigma_j(t)\}$.  If
  ${\sigma_0(t)}\land{n_2(t)}=0$, then the tangent line to $\sigma_2$
  at $t$ contains the edge of $H(t)$ through $\sigma_2(t)$ and
  $\sigma_1(t)$.  Then also, ${\sigma'_0(t)}\land{\sigma_2(t)}=0$ and
  the tangent line to $\sigma_0$ lies along another edge of $H(t)$.
  This forces a corner at $\sigma_1(t)$, which is contrary to Lemma~\ref{lemma:221}.

This nonvanishing result and the fact that ${\sigma_0(t)}\land{\sigma_2(t)}$ 
is independent of $t$ imply that there exists a function
$v_2:\ring{R}\to\ring{R}$ such that 
  \begin{equation}\label{eqn:sB}
  {\sigma'_0(t)}\land{\sigma_2(t)} + {\sigma_0(t)}\land{n_2(t)} v_2(t) = 0.
  \end{equation}
The function $v_2(t)$ is the speed, and from the form of this equation,
it is necessarily continuous in $t$.
\end{proof}

\subsection{multi-curve}

There is no harm in rescaling a hexameral domain so that its balanced
hexagon has area $\sqrt{12}$, which is the area of a regular hexagon
of inradius $1$.  For this normalization, Lemma~\ref{lemma:8G} gives
 \begin{equation}\label{eqn:AB}
 {\sigma_j(t)}\land{\sigma_{j+2}(t)} = \sqrt{3}/2.
 \end{equation}

This suggests the following definition.

\begin{definition}[multi-point, multi-curve]
A function $u:\rZ/6\rZ\to\ring{R}^2$ such that 
  \begin{equation}\label{eqn:uA}
  u_j+u_{j+2}+u_{j+4} = 0,\quad u_{j+3} = -u_j,\quad  {u_j}\land{u_{j+2}}=\sqrt{3}/2
  \end{equation}
is called a {\it multi-point}. 
An indexed set of $C^1$ curves
\[
\sigma:\ring{Z}/6\ring{Z} \times [t_0,t_1]\to \ring{R}^2
\]
is a {\it  multi-curve} if for all $t\in[t_0,t_1]$, $\sigma(t)$ is a multi-point.
That is,
\begin{itemize}
\item $\sigma_j(t) + \sigma_{j+2}(t) + \sigma_{j+4}(t) = 0$,
\item $\sigma_{j+3}(t) = -\sigma_j(t)$,
\item ${\sigma_j(t)}\land{\sigma_{j+2}(t)}=\sqrt{3}/2$.
\end{itemize}
\end{definition}

By differentiation, a multi-curve also satisfies for all $j$:
\begin{equation}\label{eqn:sigma'}
{\sigma_j(t)}\land{\sigma'_{j+2}(t)} 
+ {\sigma'_j(t)}\land{\sigma_{j+2}(t)} = 0.
\end{equation}
The boundary of a hexameral domain admits a parametrization as a
triple curve.  The converse does not hold because a multi-curve has no
convexity constraint and no constraint for the curves $\sigma_j$ to
fit seamlessly into a simple closed curve containing the origin in the
interior.

\subsection{Lipschitz continuity}

\begin{lemma}\label{lemma:s0:lip}
  Let $K$ be a miserly domain and let $\sigma_j$ be a multi-curve
  parametrization on the boundary of $K$.  Assume that $\sigma_0$ is
  parametrized by arclength $s$.  Then $\sigma_0'$ is Lipschitz
  continuous.
\end{lemma}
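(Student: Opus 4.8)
The plan is to extract a Lipschitz bound on $\sigma_0'$ from the relation among the three boundary parametrizations $\sigma_0,\sigma_2,\sigma_4$, exploiting the fact that $K$ is convex. First I would record the algebraic consequences of the multi-curve conditions. Since $\sigma_0 + \sigma_2 + \sigma_4 = 0$, differentiating with respect to arclength $s$ gives $\sigma_0' + \sigma_2' + \sigma_4' = 0$, so it suffices to control $\sigma_2'$ and $\sigma_4'$ in terms of $\sigma_0'$. From Equation~(\ref{eqn:sigma'}) with $j=0$ and the nonvanishing established in the proof of Lemma~\ref{lemma:sigma2:c1} (namely ${\sigma_0}\land{n_2}\ne 0$, and by symmetry ${\sigma_4}\land{n_0}\ne 0$), the speeds $v_2(s)$ and $v_4(s)$ of $\sigma_2$, $\sigma_4$ are given by explicit rational expressions in $\sigma_j(s)$ and the unit tangents $n_j(s)$; for instance $v_2 = -({\sigma_0'}\land{\sigma_2})/({\sigma_0}\land{n_2})$. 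Because $K$ is a fixed compact convex body with the origin in its interior, all the $|\sigma_j(s)|$ are bounded above and below away from $0$, and the denominators ${\sigma_0}\land{n_2}$, etc., are bounded away from $0$ uniformly in $s$ (this is where convexity and the no-corners property enter, via a compactness argument on the closed curve). Hence there is a constant $C$ with $v_2(s), v_4(s) \le C$ for all $s$: the three parametrizations are mutually comparable, and each $\sigma_j'$ has bounded Euclidean norm with $\sigma_j' = v_j n_j$.

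The heart of the argument is then to bound the \emph{variation} of $\sigma_0'(s) = n_0(s)$, i.e. to show the turning of the tangent direction is Lipschitz in $s$. Differentiating Equation~(\ref{eqn:AB}) (equivalently (\ref{eqn:sigma'})) a second time, or better, differentiating (\ref{eqn:sB})-type relations for all three indices and combining them, produces a linear system expressing the curvature-type quantities $n_j'$ of the three curves in terms of one another. The key point is that convexity of $K$ forces each curve $\sigma_j$ to be convex, so each turning rate (the derivative of the angle of $n_j$) is a nonnegative measure; combined with the closed-curve identity $\sigma_0'+\sigma_2'+\sigma_4'=0$ and its derivative $v_2' n_2 + v_2 n_2' + v_4' n_4 + v_4 n_4' = -n_0'$, one gets that $|n_0'|$ is controlled by a bounded combination of the $|n_j'|$ and the (bounded) $|v_j'|$. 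Using that the total turning of each convex curve $\sigma_j$ over a full period is exactly $2\pi$, together with the local comparability of the three arclength parameters, one converts an a priori total-variation bound into a pointwise Lipschitz estimate: $|n_0'(s)|$, which exists a.e.\ because $n_0$ is of bounded variation, is essentially bounded.

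The main obstacle I expect is the possible degeneration of the geometric quantities that sit in denominators — making the bound on $v_2, v_4$ and on the second-derivative system \emph{uniform} in $s$. A pointwise nonvanishing statement like ${\sigma_0(s)}\land{n_2(s)}\ne 0$ is not enough; one needs a uniform lower bound. This should follow from a compactness argument: the map $s\mapsto(\sigma_0(s),n_0(s),\sigma_2(s),n_2(s),\sigma_4(s),n_4(s))$ has relatively compact image (the $\sigma_j$ lie in a fixed compact set and the $n_j$ on the unit circle), and on the closure of this image the relevant determinants are continuous and, by Lemma~\ref{lemma:221} and Lemma~\ref{lemma:sigma2:c1}, nonzero; hence they are bounded away from $0$. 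A secondary technical point is that $n_0$ is only known to be continuous a priori, so the ``differentiate twice'' step must be interpreted in the sense of distributions or of functions of bounded variation — one shows $n_0 \in BV$ first (from convexity, its angle is monotone), deduces the $n_0'$ identity a.e., and then upgrades the a.e.\ bound to genuine Lipschitz continuity of $\sigma_0' = n_0$ using that an a.e.-bounded-derivative continuous $BV$ function is Lipschitz.
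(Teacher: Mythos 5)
Your approach has a genuine gap at its crucial step: the passage from ``the total turning of each convex curve over a period is $2\pi$'' to a pointwise essential bound on $|n_0'(s)|$. A total-variation bound on the tangent angle is exactly what convexity gives you for free, and it is strictly weaker than the Lipschitz conclusion: a convex $C^1$ curve can have tangent angle continuous and monotone but with variation concentrating near a point (curvature blowing up), and nothing in the bounded-variation framework rules this out. The linear system you propose does not rescue the argument, because it is circular in two ways. First, the quantities $v_j'$ that you treat as ``bounded'' are themselves expressed through the $n_j'$ via the differentiated version of~(\ref{eqn:sB}), so you are bounding curvatures by curvatures. Second, and more fundamentally, writing $\sigma_j''=v_j'n_j+v_j^2\kappa_j n_j^{\perp}$ and summing, the relation $\sigma_0''+\sigma_2''+\sigma_4''=0$ constrains only the combination $\sum_j v_j^2\kappa_j n_j^{\perp}$. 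Since the three inward normals $n_0^{\perp},n_2^{\perp},n_4^{\perp}$ positively span the plane (for the circle at the roots of unity they sum to zero exactly), there are nontrivial nonnegative solutions $(\kappa_0,\kappa_2,\kappa_4)$ of the homogeneous system with all entries arbitrarily large; the algebra of the multi-curve conditions therefore cannot, even in principle, force an upper bound on the curvature. A further warning sign is that your argument uses only convexity, absence of corners, and the multi-curve identities, so it would apply to every hexameral domain; the lemma as stated is for miserly domains, and the minimality is used essentially.

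The missing ingredient is a genuinely geometric upper bound on curvature, which the paper imports from Reinhardt: for a miserly domain, near each $\sigma_0(s)$ an arc of the hyperbola $H_s$ whose asymptotes are the tangent lines at $\sigma_{\pm 1}(s)$ lies \emph{inside} $K$. By continuity and compactness of the boundary the curvatures of these hyperbolas at $\sigma_0(s)$ are uniformly bounded by some $\kappa$, so a disk of curvature $\kappa$ can be placed locally inside $K$ tangent to the boundary at every point; the curve is then wedged between its tangent line and this disk, and the two-sided pinching yields the Lipschitz bound on the unit tangent $\sigma_0'$. Your compactness discussion for bounding the denominators $\sigma_0\land n_2$ away from zero is sound and is essentially what the paper does in Lemma~\ref{lemma:sj:lip}, but it addresses the speeds $v_j$, not the turning rate, and so does not close the gap.
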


\begin{proof} For each $s$, let $H_s$ be the hyperbola through
  $\sigma_0(s)$ whose asymptotes are the lines in direction
  $\sigma'_j(s)$ through $\sigma_j(s)$, for $j=\pm 1$.  By
  Reinhardt~\cite[p.220]{Reinhardt:1934}, near $\sigma_0(s)$, the an arc of $H_s$
  lies inside $K$.  As $s$ varies, by continuity over the compact
  boundary, the curvatures of the hyperbolas $H_s$ at $\sigma_0(s)$
  are bounded above by some $\kappa\in \ring{R}$.  This means that a
  disk of fixed curvature $\kappa$ can be placed locally in $K$ at each point
  $\sigma_0(s)$ so that $\sigma_0'(s)$ is tangent to the disk.  The
  curve $\sigma_0$ is constrained on the other side by convexity, so
  that $\sigma_0$ is wedged between the tangent line to $\sigma_0$ at $s$
  and the disk.

  If we parametrize the curve by arclength, then $\sigma_0'(s)$ has unit length.
  Lipschitz continuity now follows from this bound $\kappa$ on the curvature.
\end{proof}

% A few more words about the proof:  Assume |s2-s|<=eps.  Take s=0 and sigma_0'(0)=0.
% The curve is wedged between the tangent line M and the disk of curvature kappa.
% At a point $\sigma_0(s2)$ with $s2$ near $s=0$, the worst possible slope of $\sigma_0'(s2)$
% occurs if the tangent line L at s2 is also a tangent line to the disk.
% The line L also meets M at some point.  Without loss of generality take \sigma_0(s2) to
% be the point of intersection of L and M.
% For points along M, the largest slope for L is obtained when s2 =eps.
% The worst-case slope of L is now completely determined, and an easy calc gives Lipschitz.

\begin{lemma}\label{lemma:sj:lip}
  Let $K$ be a miserly domain and let $\sigma_j$ be a multi-curve
  parametrization on the boundary of $K$.  Assume that $\sigma_0$ is
  parametrized by arclength $s$.  Then $\sigma_j'$ is Lipschitz
  continuous for all $j$.
\end{lemma}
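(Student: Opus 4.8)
The plan is to bootstrap from Lemmas~\ref{lemma:sigma2:c1} and \ref{lemma:s0:lip}, reducing everything to a single curve. The multi-curve relations give $\sigma_{j+3}(s)=-\sigma_j(s)$ and $\sigma_0(s)+\sigma_2(s)+\sigma_4(s)=0$, so $\sigma_3'=-\sigma_0'$, $\sigma_5'=-\sigma_2'$, $\sigma_4'=-\sigma_0'-\sigma_2'$, and $\sigma_1'=-\sigma_4'$. Hence, granting Lemma~\ref{lemma:s0:lip}, it suffices to prove that $\sigma_2'$ is Lipschitz. From the proof of Lemma~\ref{lemma:sigma2:c1} we have $\sigma_2'(s)=v_2(s)\,n_2(s)$, where $n_2(s)$ is the (continuous) unit tangent to the boundary at $\sigma_2(s)$ and, by \eqref{eqn:sB},
\[
v_2(s) = -\,\frac{\sigma_0'(s)\land\sigma_2(s)}{\sigma_0(s)\land n_2(s)}.
\]
The denominator is continuous and nowhere zero --- this is exactly the nonvanishing ${\sigma_0}\land{n_2}\ne 0$ established in the proof of Lemma~\ref{lemma:sigma2:c1} --- so by compactness of the boundary it is bounded away from $0$ in absolute value. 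Since $|\sigma_0'|\equiv1$ and $\sigma_2$ ranges over the compact boundary, the numerator is bounded, and therefore $v_2$ is bounded, say $0\le v_2\le M$.

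Next I would upgrade $n_2$ and $\sigma_2$ to Lipschitz. Writing $\ell(s)$ for a continuous choice of the arclength coordinate of $\sigma_2(s)$ along the boundary, the fact that $\sigma_2$ is a $C^1$ reparametrization of the boundary with the same orientation as $\sigma_0$ (Lemma~\ref{lemma:sigma2:c1}) gives $\sigma_2(s)=\sigma_0(\ell(s))$, $\ell'(s)=v_2(s)$, and $n_2(s)=\sigma_0'(\ell(s))$ (the last identity holds even where $v_2$ vanishes, since $n_2$ is the unit tangent at the boundary point $\sigma_0(\ell(s))$). Because $0\le \ell'\le M$, the function $\ell$ is Lipschitz; because $\sigma_0'$ is Lipschitz by Lemma~\ref{lemma:s0:lip} and $\sigma_0$ is $1$-Lipschitz, the compositions $n_2=\sigma_0'\circ\ell$ and $\sigma_2=\sigma_0\circ\ell$ are Lipschitz and bounded.

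Finally I would feed this back into the formula for $v_2$. The map $(u,w)\mapsto u\land w$ is bilinear, so a wedge of two bounded Lipschitz functions is again bounded and Lipschitz; thus $\sigma_0'\land\sigma_2$ and $\sigma_0\land n_2$ are Lipschitz, the latter also bounded away from $0$, so its reciprocal is Lipschitz. Hence $v_2$ is Lipschitz, and $\sigma_2'=v_2\,n_2$ is a product of bounded Lipschitz functions, hence Lipschitz. Together with the reduction of the first paragraph, this proves the lemma for all $j$.

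The only real subtlety is the \emph{apparent} circularity: $v_2$ appears in the representation $\sigma_2(s)=\sigma_0(\ell(s))$, while $\sigma_2$ appears in the formula for $v_2$. This is resolved by the two-stage argument above --- first boundedness of $v_2$, which needs only compactness of the boundary and the nonvanishing denominator from Lemma~\ref{lemma:sigma2:c1}, and then, with $n_2$ and $\sigma_2$ already known to be Lipschitz, Lipschitz continuity of $v_2$. A minor bookkeeping point, if the boundary is parametrized over a circle rather than an interval, is that $\ell$ should be taken as a local continuous branch; since Lipschitz continuity is a local property on a compact domain this causes no difficulty.
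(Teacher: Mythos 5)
Your proof is correct and follows essentially the same route as the paper's: reduce to $j=2$, extract the speed $v_2$ from Equation~(\ref{eqn:sB}) using that $\sigma_0\land n_2$ is bounded away from zero, and combine with Lemma~\ref{lemma:s0:lip} via the chain rule $\sigma_2'=v_2\,n_2$. Your two-stage bootstrap (boundedness of $v_2$ first, then Lipschitz continuity) makes explicit a point the paper's terser argument passes over, but it is the same proof in substance.
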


\begin{proof}
  By evident symmetries, it is enough to consider $j=2$.  Let $t$ be
  the arclength parameter for the curve $\sigma_0$ and let $s$ be the
  arclength parameter for the curve $\sigma_2$. We consider $s$ as a
  function of $t$.  By Lemma~\ref{lemma:sigma2:c1}, the function $s$
  is $C^1$.

We show that $ds/dt$ is Lipschitz continuous function of $t$.  In
fact, $ds/dt=v_2(t)$, given by (\ref{eqn:sB}).  The coefficient
$\sigma_0(t)\land n_2(t)$ of $v_2(t)$ in (\ref{eqn:sB}) is nonzero and
by continuity is bounded away from $0$.  Thus, the Lipschitz
continuity of $v_2$ follows from the Lipschitz continuity of the other
functions $\sigma_0'$, $\sigma_2$, $\sigma_0$, and $n_2$ in that
equation.

Now we show that $\sigma_2'$ is a Lipschitz continuous function of $t$.
Write
\[
\sigma_2'(t) = \frac{d\sigma_2}{d s} \frac{ds}{dt}.
\]
The first term on the right is Lipschitz continuous by Lemma~\ref{lemma:s0:lip}.  The
second term on the right has just been shown to be Lipschitz continuous.  Hence the result.
\end{proof}

Since $\sigma_j'$ is Lipschitz continuous, Rademacher's theorem
implies that $\sigma_j'$ is differentiable almost everywhere (or
directly we have that the angular argument of $\sigma_j'$ is
monotonic, hence differentiable almost everywhere).  Thus, we may
express the convexity constraint locally at $\sigma_j(t)$ by a second
derivative:
\begin{equation}\label{eqn:curvature''}
\sigma_j'(t)\land \sigma_j''(t) \ge 0.
\end{equation}

\subsection{special linear group}

The special linear group
$SL_2(\ring{R})$ acts on $\ring{R}^2$ by linear transformations and preserves the
wedge product:
\[
    {g u}\land{ g v}= \det(g)({u\land v}) .
\]
Conversely any affine transformation fixing the origin and fixing some
${u}\land{v}\ne 0$ must be given by some $g\in SL_2(\ring{R})$.

The group $SL_2(\ring{R})$ acts on the data of the Reinhardt problem,
on the set of miserly domains, on the set of multi-curves, and so
forth.

Given a multi-curve $\sigma$ and multi-point $u$, there exists a
unique $C^1$ curve $\phi:[t_0,t_1]\to SL_2(\ring{R})$, 
such that
   \begin{equation}\label{eqn:sigma-phi}
   \sigma_j(t) = \phi(t) u_j
   \end{equation}
for $j\in \ring{Z}/6\ring{Z}$.

The transformed multi-curve $\phi(t_0)^{-1} \sigma_j$
starts at 
$\sigma_j(t_0) = u_j$.
It is often convenient to use the multi-point formed by roots of unity:
\begin{equation}\label{eqn:roots}
u^*_j = \exp(\pi i j/3),\quad { i = \sqrt{-1} }.
\end{equation}
In particular, any hexameral domain is equivalent under
$SL_2(\ring{R})$ to a hexameral domain that starts at the multi-point
$u^*$ on the unit circle.  We call this a {\it circle representation}
of the hexameral domain or multi-curve.

Let $\sigma$ be a multi-curve.  Define $X:[t_0,t_1]\to\mathfrak{gl}_2(\rR)$
by
\[
\sigma'_j(t) = X(t)\sigma_j(t),\hbox{ for } j = 0,2.
\]
Then also,
\[
\sigma'_j(t) = X(t)\sigma_j(t),\hbox{ for } j\in\rZ.
\]
and
\begin{equation}\label{eqn:Xt}
\phi'(t) = X(t) \phi(t),
\end{equation}
where $\phi$ is given by Equation~(\ref{eqn:sigma-phi}).  Equation
(\ref{eqn:sigma'}) implies that $X(t)\in\mathfrak{sl}_2(\rR)$, the Lie
algebra of $SL_2(\rR)$.  The tangent lines to the curves $\sigma_j$
are determined by the image of $X(t)$ in the projective space
$\rP(\mathfrak{sl}_2(\rR))$ over the vector space
$\mathfrak{sl}_2(\rR)$.

If we transform $\sigma_j$ to $g\sigma_j$, for some $g\in SL_2(\rR)$,
then $X(t)$ transforms to $\op{Ad}\, g\, X = g X(t) g^{-1}\in \mathfrak{sl}_2(\rR)$.
By Lemma~\ref{lemma:sj:lip}, if $\sigma_0$ is parametrized by arclength, then
$X$ is Lipschitz continuous.

We have seen that the parametrized boundary of a hexameral domain
determines a curve in $SL_2(\ring{R})$.  Conversely, a curve in $SL_2$
determines a hexameral domain in the following sense.

\begin{lemma}\label{lemma:sl2-rein}  
  Let $K$ be a centrally symmetric convex domain (with center of
  symmetry $0$) with a multi-point $u$ on the boundary.  Let
  $\phi:[t_0,t_1]\mapsto SL_2(\ring{R})$ be a $C^1$ curve.  Define
  curves $\sigma_j$ by Equation~(\ref{eqn:sigma-phi}).  Assume that
  $\sigma_j$ parametrizes the boundary of $K$, for
  $j\in\ring{Z}/6\ring{Z}$.
%and that Condition~(\ref{eqn:theta}) holds.  
%Then the arcs
%   $$\sigma_0(t),\ \sigma_1(t),\ \sigma_2(t),\ \sigma_{3}(t),\ \sigma_4(t),%\ \sigma_{5}(t)$$
%for $t_0\le t\le t_1$ form a simple closed curve.  The simple closed curve bounds a centrally symmetric
% region $K$ with center of symmetry at the origin.  Assume further that $K$ is convex.  
Then
$K$ is a hexameral domain.
\end{lemma}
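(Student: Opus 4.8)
The plan is to verify the three defining conditions of a hexameral domain one at a time for $K$, using the hypotheses that $K$ is centrally symmetric convex with center $0$, that it carries a multi-point $u$ on its boundary, and that the $SL_2(\ring{R})$-translates $\sigma_j(t)=\phi(t)u_j$ all parametrize $\partial K$. Central symmetry with center $0$ and the absence of corners are the two easy points: central symmetry is a hypothesis, and the absence of corners follows because $\sigma_0$ is a $C^1$ parametrization of $\partial K$ with nonvanishing speed (the speed cannot vanish, since $\sigma_0' = X\sigma_0$ and, were $X(t_0)=0$ at some $t_0$, all the curves $\sigma_j$ would be momentarily stationary — but more simply, if $K$ had a corner, no $C^1$ parametrization of $\partial K$ could be regular there, contradicting that $\sigma_0$ is $C^1$ and parametrizes the whole boundary). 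So the content of the lemma lies entirely in the third condition: that each boundary point is the midpoint of a balanced hexagon, and that all these balanced hexagons have the same area.

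Here is how I would handle the third condition. Fix $t$ and set $u = \sigma(t)$, which is a multi-point by the multi-curve property built into the $\sigma_j$ (this is where one uses that $\phi(t)\in SL_2$: applying $\phi(t)$ to the multi-point $u$ preserves all three relations in (\ref{eqn:uA}), the wedge relation because $\det\phi(t)=1$). Given a multi-point $u = (u_j)_{j\in\ring{Z}/6\ring{Z}}$, I would construct the hexagon $G$ whose vertices $w_j$ are defined by inverting the relation $u_j = (w_j + w_{j+1})/2$ from the proof of Lemma~\ref{lemma:8G}: using $u_{j+3}=-u_j$ one checks the alternating sum $u_0 - u_1 + u_2 - u_3 + u_4 - u_5$ vanishes, which is exactly the solvability/consistency condition, and then $w_j$ is determined (the centrally symmetric solution $w_{j+3}=-w_j$ is the relevant one). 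One must then verify that this $G$ is genuinely a convex hexagon containing $K$ with the $u_j$ as midpoints of its sides: convexity and the containment $K\subseteq G$ come from the fact that the side of $G$ through $u_j$ is the segment of the tangent line to $\partial K$ at $\sigma_j(t)=u_j$ cut off by the neighboring sides — this tangent-line description is what makes $G$ a supporting hexagon, and the three midpoint relations in (\ref{eqn:uA}) are precisely Reinhardt's conditions (Lemma~\ref{lemma:8G}) guaranteeing that the $u_j$ sit at the midpoints. Non-degeneracy (that $G$ does not collapse to a parallelogram) follows from Lemma~\ref{lemma:parallel} once we know $K$ has no corners.

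Finally, for the constancy of the area: by Lemma~\ref{lemma:8G} the area of $G$ is $8$ times the area of the triangle $\{0, u_0, u_2\} = \{0,\sigma_0(t),\sigma_2(t)\}$, i.e. $4\,(\sigma_0(t)\land\sigma_2(t))$, and the multi-curve condition ${\sigma_0(t)}\land{\sigma_2(t)} = \sqrt3/2$ makes this independent of $t$ — indeed $\aa(G)=2\sqrt3=\sqrt{12}$ for every $t$. So all the balanced hexagons arising from boundary points of $K$ have the same area, and $K$ is hexameral. The step I expect to be the main obstacle is the geometric verification in the previous paragraph that the combinatorial hexagon $G$ reconstructed from the multi-point is actually a convex hexagon circumscribing $K$ with the correct incidence structure — i.e. that the tangent lines to $\partial K$ at the six points $\sigma_j(t)$ close up into a hexagon with the $\sigma_j(t)$ as side-midpoints. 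This is implicit in Reinhardt's development, but spelling out why the multi-point relations force this closure (rather than, say, the tangent lines failing to bound a hexagon, or the contact points landing off the midpoints) is the delicate part; convexity of $K$ and the no-corners property are the inputs that rule out the degenerate configurations.
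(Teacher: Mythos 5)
You have the right overall skeleton --- reduce everything to the balanced-hexagon condition at each fixed $t$, observe that $\sigma(t)=\phi(t)u$ is a multi-point because $\phi(t)\in SL_2(\ring{R})$ preserves the relations (\ref{eqn:uA}), and deduce constancy of area from ${\sigma_0(t)}\land{\sigma_2(t)}=\sqrt3/2$ via Lemma~\ref{lemma:8G}; that last computation matches the paper exactly. But the step you yourself flag as ``the main obstacle'' is the actual content of the lemma, and the route you choose makes it harder than necessary. Inverting $u_j=(w_j+w_{j+1})/2$ does not determine $G$: the homogeneous system $w_j+w_{j+1}=0$ has the two-parameter family of solutions $w_j=(-1)^j c$, $c\in\ring{R}^2$, and every member of this kernel already satisfies $w_{j+3}=-w_j$, so imposing central symmetry does not select a unique hexagon. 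There is a two-parameter family of centrally symmetric hexagons having the $u_j$ as side-midpoints, only one of which circumscribes $K$ (cf.\ Lemma~\ref{lemma:mid1}), and nothing in your construction identifies that one or shows it contains $K$.

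The paper runs the construction in the opposite direction, which dissolves exactly this difficulty: define $w_j(t)$ as the intersection of the tangent line to $\partial K$ at $\sigma_j(t)$ with the tangent line at $\sigma_{j+1}(t)$. Then $G$ is by construction a convex hexagon containing $K$ (tangent lines to a convex body are supporting lines), and the only thing left to check is that $\sigma_j(t)$ is the midpoint of its side --- an overdetermined linear system whose consistency conditions are precisely $\sigma_0+\sigma_2+\sigma_4=0$ and $({\sigma_0}\land{\sigma_2})'=0$, i.e., the multi-point relations you already verified. So the fix is to swap the direction of your construction: start from tangent lines, not from midpoints. A smaller point: ``no corners'' does not follow from $C^1$-ness of $\sigma_0$ alone, since a $C^1$ parametrization can trace a corner where its derivative vanishes (e.g., $t\mapsto(t^3,|t|^3)$); you need regularity of $\sigma_0$, i.e., $\phi'(t)u_0\ne 0$, which your parenthetical gestures at but does not establish ($X(t)\ne 0$ does not imply $X(t)\sigma_0(t)\ne 0$ when $\det X(t)=0$).
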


\begin{proof} 
  We check the balanced hexagon condition.  At time $t$, let $w_j(t)$
  be the point of intersection of the tangent line to $\sigma_j(t)$
  with the tangent line to $\sigma_{j+1}(t)$.  The condition that
  $w_j(t)$ are the vertices of a balanced hexagon generates a system
  of six linear equations and three unknowns.  Consistency of this
  system of equations imposes three constraints:
   \[
   \begin{array}{lll}
   0 &= \sigma_0(t) + \sigma_2(t) + \sigma_4(t),\\
   0 &= {\sigma_0(t)}\land{\sigma_2'(t)}+ {\sigma_0'(t)}\land{\sigma_2(t)}.\\
   \end{array}
   \]
   Integrating the final constraint, gives that
   ${\sigma_0(t)}\land{\sigma_2(t)}$ is constant.  These conditions
   hold for a curve coming from $\phi$.  Thus, solving for $w_j(t)$,
   we find that each point of the simple closed curve is a midpoint of
   a balanced hexagon with vertices $w _j(t)$.  Since
   ${\sigma_0(t)}\land{\sigma_2(t)}$ is constant, these balanced
   hexagons have the same area.  Thus, $K$ is a hexameral domain.
\end{proof}

\subsection{star conditions}\label{sec:star}

The convexity of a hexameral domain places certain constraints on the
tangent $X\in\mathfrak{sl}_2(\ring{R})$.  We normalize the curve by
applying an affine transformation so that $\phi(0)=I$ in the circle
representation.  This means that the roots of unity $u^*_j$ lie on the
boundary of the hexameral domain.  We form a hexagram through these
six points.  Specifically, we construct the six equilateral triangles,
each with three vertices:
\[
   u^*_j,\quad u^*_{j+1},\quad (u^*_j + u^*_{j+1}).
\]
For the boundary curve to be convex, the tangent direction $X u^*_j$
at time $t=0$ must lie between the the secant lines joining $u^*_j$
with $u^*_{j\pm 1}$, hence must point into this triangle for each $j$.
If we write
\[
X  = \left(\begin{matrix} a & b \\ c & -a \end{matrix}\right),
\]
we have the following constraints on $X$:
\begin{equation}\label{eqn:star}
\sqrt{3} |a| < c,\quad 3 b + c < 0.
\end{equation}

\begin{lemma}
In this context,
\[\det(X) > 0.\]
\end{lemma}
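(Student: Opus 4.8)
The plan is a direct computation from the star inequalities (\ref{eqn:star}). First I would record that for $X=\left(\begin{smallmatrix} a & b\\ c & -a\end{smallmatrix}\right)$ one has $\det(X) = -a^2 - bc$, so the claim $\det(X)>0$ is equivalent to $bc < -a^2$, i.e. $bc + a^2 < 0$. Everything then follows from extracting sign information from the two constraints.

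Next I would observe the sign of $c$ and $b$. From $\sqrt{3}|a| < c$ we get $c > \sqrt3\,|a| \ge 0$, so $c > 0$; squaring (legitimate since both sides are nonnegative) gives $c^2 > 3a^2$. From $3b + c < 0$ we get $b < -c/3 < 0$. The key step is to multiply the inequality $b < -c/3$ by the positive quantity $c$, which preserves the direction and yields $bc < -c^2/3$. Then, combining with $c^2 > 3a^2$ (so that $-c^2/3 < -a^2$), we obtain the chain
\[
bc < -\frac{c^2}{3} < -a^2,
\]
hence $bc + a^2 < 0$ and therefore $\det(X) = -a^2 - bc > 0$, as desired.

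There is no real obstacle here; the statement is an elementary consequence of the star conditions, and the only point requiring a moment's care is that one must check $c>0$ before multiplying the second inequality through by $c$ (and that the squaring step used to pass from $\sqrt3|a|<c$ to $c^2>3a^2$ is valid because both sides are nonnegative). If one wanted to make the geometric content transparent, one could alternatively note that the star conditions force $X$ to lie in a cone in $\mathfrak{sl}_2(\rR)$ on which the quadratic form $-\det$ (which is the Killing-type form, indefinite of signature $(2,1)$) is negative, but the bare-hands inequality chain above is the shortest route.
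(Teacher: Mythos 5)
Your proof is correct and is essentially the same computation as the paper's: both arguments use $\sqrt{3}|a|<c$ to get $c>0$ and $3a^2<c^2$, and then $3b+c<0$ multiplied through by $c$ to conclude $-bc-a^2>0$. The paper condenses this into the single chain $\det(X)=-bc-a^2\ge -bc-\tfrac{c^2}{3}=\tfrac{-c(3b+c)}{3}>0$; yours just unpacks the same steps.
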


\begin{proof} 
\[
\det(X) = - b c - a^2 \ge - b c  - \frac{c^2}{3} = \frac{-c(3 b + c)}{3} >0.
\]
\end{proof}

\section{Rank of Multi-Curves}

\begin{definition}[rank]\label{def:rank}
Let $\sigma$ be a multi-curve.  We say that
it has {\it well-defined rank} if the multi-curve is $C^2$,
parametrized by $[t_0,t_1]$, with
the property that for each curve $\sigma_j$ one of the two conditions
hold:
\begin{itemize}
\item It is a line segment.
\item The curvature of $\sigma_j$
is nonzero on the open interval $(t_0,t_1)$.
\end{itemize}
The {\it rank} of such a multi-curve is the number $k\in\{0,1,2,3\}$
of curves $\sigma_{j}$ ($j\in2\rZ/6\rZ$) that are {\it not} line
segments.
\end{definition}
For example, a multi-curve parametrizing a circle
has rank $3$.  The smoothed octagon is parametrized by finitely many
multi-curves of rank $1$.

\begin{lemma}\label{lemma:curvature}  
  Let $\phi:[t_0,t_1]\to SL_2(\ring{R})$ be twice differntiable at $t$.
  Then 
  there exists a $j$ such that the curvature constraint
  (\ref{eqn:curvature''}) is a strict inequality at $t$.
\end{lemma}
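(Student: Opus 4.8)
The plan is to argue by contradiction: suppose that at some $t$ the curvature constraint $\sigma_j'(t)\land\sigma_j''(t)\ge 0$ is an equality for \emph{every} $j\in 2\rZ/6\rZ$, and derive a contradiction with the structural identities forced by the multi-curve conditions together with $\det(X)>0$ from the star conditions. The key observation is that, via Equation~(\ref{eqn:Xt}) and $\sigma_j'(t)=X(t)\sigma_j(t)$, the second derivative is $\sigma_j''(t)=(X'(t)+X(t)^2)\sigma_j(t)$, so that
\[
\sigma_j'(t)\land\sigma_j''(t) \;=\; (X\sigma_j)\land\big((X'+X^2)\sigma_j\big)
\;=\; \big\langle (X'+X^2 ),\, \sigma_j\otimes\sigma_j\big\rangle
\]
in a suitable bilinear pairing. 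More concretely, for a fixed $2\times 2$ matrix $M$ and a vector $v=(v_1,v_2)$, the quantity $(Xv)\land(Mv)$ is a quadratic form in $v$ whose coefficients are explicit bilinear expressions in the entries of $X$ and $M$. Writing $v=\sigma_j(t)$, I would expand $(X\sigma_j)\land((X'+X^2)\sigma_j)$ as a quadratic form $Q(\sigma_j)$ on $\ring{R}^2$.

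The next step uses the fact that the three vectors $\sigma_0(t),\sigma_2(t),\sigma_4(t)$ are pairwise linearly independent (by the normalization ${\sigma_j}\land{\sigma_{j+2}}=\sqrt3/2\ne 0$) and sum to zero. A quadratic form on $\ring{R}^2$ that vanishes on three pairwise-independent directions must vanish identically; more precisely, a nonzero quadratic form on $\ring{R}^2$ has at most two projective zeros, and moreover if it is nonnegative it has at most one (a repeated root). So if $Q(\sigma_j)=0$ for all three $j\in\{0,2,4\}$ and $Q\ge 0$ on these directions, one concludes $Q\equiv 0$. Hence all coefficients of the quadratic form $(Xv)\land((X'+X^2)v)$ vanish. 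I would then extract from "$Q\equiv 0$" the algebraic relation this imposes on $X$ and $X'$. The cleanest way: the coefficient of the $v_1^2$, $v_1 v_2$, $v_2^2$ terms each vanish, which is three scalar equations; combined they should force $X'+X^2$ to be a scalar multiple of $X$, i.e. $(X'+X^2)\wedge X = 0$ as a "vector wedge" of matrices, equivalently $X'+X^2 = \lambda X + \mu I$ for scalars $\lambda,\mu$ (using that $X\in\mathfrak{sl}_2$ and $X$ is not a multiple of $I$).

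Finally I would extract the contradiction. Taking the trace of $X'+X^2=\lambda X+\mu I$ and using $\op{tr}X=\op{tr}X'=0$, $\op{tr}X^2 = -2\det X$, gives $-2\det X = 2\mu$, so $\mu=-\det X<0$ by the lemma following Equation~(\ref{eqn:star}). On the other hand one can also pair the matrix identity against a suitable vector or take determinants: $\det(X'+X^2)=\det(\lambda X+\mu I)=\mu^2-\lambda^2\det X$ (using $\det X\ne 0$ and the characteristic polynomial of the traceless matrix $X$). I expect the main obstacle to be precisely this last bookkeeping: showing that the relation $X'+X^2=\lambda X+\mu I$ is actually incompatible with the star constraints (\ref{eqn:star}) at every $t$, rather than merely at generic $t$. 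One route is to feed the relation back into the convexity hypothesis for a \emph{neighboring} parameter value: the relation $X'+X^2=\lambda X +\mu I$ with $\mu<0$ is a Riccati-type equation whose solutions are exactly the ones giving circular arcs (the Euler–Lagrange solutions), and for those the curvature constraint is strict for all $j$ anyway — so the supposed degenerate $t$ cannot exist. I would organize this as: (i) if $\lambda=0$, the equation integrates to $\phi$ tracing a circle and one checks directly that $\sigma_j'\land\sigma_j''=\det X>0$ for all $j$, a contradiction with our assumption that all three are zero; (ii) if $\lambda\ne 0$, reparametrize to absorb $\lambda$ and reduce to case (i). The honest statement is that this final reduction is where the real content lies, and I would expect to spend most of the write-up making the passage from the pointwise algebraic identity to the contradiction airtight.
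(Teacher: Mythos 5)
Your reduction to the quadratic form $Q(v)=(Xv)\land\bigl((X'+X^2)v\bigr)$, together with the observation that a binary quadratic form vanishing on the three pairwise-independent directions $\sigma_0,\sigma_2,\sigma_4$ must vanish identically, is sound, and it is an invariant repackaging of what the paper does in coordinates: the paper assumes the constraint degenerates to an equality for $j=0,2$ only and computes that the value at $j=4$ is then $3\sqrt{3}\,(a^2+bc)^2/(3a+\sqrt{3}c)$, which the star conditions force to be strictly positive. (A small caveat on your version: you only know $Q\ge 0$ at the three directions $\sigma_j$, not on all of $\ring{R}^2$, so the remark that two zeros of a nonnegative form would suffice is not available to you; you genuinely need all three to vanish, which is fine for proving the lemma as stated.)

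The gap is in your final step, and as written it is a real one. From $Q\equiv 0$ you conclude only $X'+X^2=\lambda X+\mu I$, compute $\mu=-\det X<0$, and then concede that extracting a contradiction ``is where the real content lies,'' proposing to integrate a Riccati equation and appeal to circular arcs. That route cannot close: the lemma assumes twice differentiability only at the single point $t$, so your matrix relation holds at one instant and cannot be integrated to say $\phi$ traces a circle near $t$. But the detour is unnecessary, because your intermediate conclusion is strictly weaker than what $Q\equiv 0$ gives. Since $\det X>0$, $X$ is invertible, and $(Xv)\land(Mv)\equiv 0$ says $Mv$ is parallel to $Xv$ for every $v$; substituting $v=X^{-1}w$ shows every $w$ is an eigenvector of $MX^{-1}$, hence $MX^{-1}$ is scalar and $M=X'+X^2=\lambda X$ exactly, with no $\mu I$ term. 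Taking traces, $\op{tr}X'=0$ (as $X(t)\in\mathfrak{sl}_2(\ring{R})$ for all $t$), $\op{tr}X^2=-2\det X$ by Cayley--Hamilton, and $\op{tr}(\lambda X)=0$, so $\det X=0$, contradicting $\det X>0$ from the star conditions. Equivalently: the coefficient $\mu$ you computed to be $-\det X$ is also forced to be $0$, and that is the whole contradiction. With this one correction your argument is complete and is a legitimate, more conceptual alternative to the paper's explicit calculation.
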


\begin{proof}  %(Done in Mathematica.)  
    Without of loss of generality, we may take $t=0$ and may apply an affine
  transformation, so that the boundary is given by $\phi u^*_j$, with
  $\phi(0)=I$.  Let $\phi' u^*_j = X\phi u^*_j$.  
The constraint (\ref{eqn:curvature''}) becomes
\[
\phi' u^*_j \land \phi'' u^*_j = X u^*_j \land (X' + X^2) u^*_j.
\]

A short calculation
assuming the vanishing of this curvature inequality for $j=0,2$ gives for $j=4$:
\[
X u^*_4 \land (X' + X^2) u^*_4 = \frac{3\sqrt{3} (a^2 + b c)^2}{3 a + \sqrt{3} c}.
\]
The star conditions in Section~\ref{sec:star} imply that the numerator
and denominator are both positive.
\end{proof}

\begin{lemma}
No multi-curve has rank $0$.
\end{lemma}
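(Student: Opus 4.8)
The plan is to obtain a contradiction with Lemma~\ref{lemma:curvature}. Suppose $\sigma$ is a multi-curve of rank $0$. By the definition of well-defined rank, $\sigma$ is $C^2$ and every curve $\sigma_j$ is a line segment; in particular $\sigma_j''(t)$ is parallel to $\sigma_j'(t)$ for all $j$ and all $t$, so ${\sigma_j'(t)}\land{\sigma_j''(t)}=0$ identically, and the curvature inequality (\ref{eqn:curvature''}) is never a strict inequality. This contradicts Lemma~\ref{lemma:curvature}, provided that lemma applies, i.e.\ provided the associated curve $\phi:[t_0,t_1]\to SL_2(\rR)$ with $\sigma_j=\phi u_j$ is twice differentiable. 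That is easy to check: since $\sigma$ is $C^2$ and ${\sigma_0(t)}\land{\sigma_2(t)}=\sqrt3/2\ne0$, solving $\sigma_j'=X\sigma_j$ ($j=0,2$) for $X(t)$ exhibits $X$ as a $C^1$ function of $t$, and then $\phi'=X\phi$ shows $\phi$ is $C^2$. So Lemma~\ref{lemma:curvature} applies at every interior $t$, and the proof is complete.

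It is worth recording the contradiction explicitly in terms of $X$, which shows it to be a streamlined version of the $j=4$ computation inside the proof of Lemma~\ref{lemma:curvature}. From $\sigma_j=\phi u_j$ one gets $\sigma_j'=X\sigma_j$ and $\sigma_j''=(X'+X^2)\sigma_j$, so for each $t$ the curvature form
\[
Q_t(v) := {(X(t)v)}\land{\bigl((X'(t)+X(t)^2)v\bigr)}
\]
is a homogeneous quadratic form on $\rR^2$ with $Q_t(\sigma_j(t))={\sigma_j'(t)}\land{\sigma_j''(t)}$. If $\sigma_0,\sigma_2,\sigma_4$ are all line segments, then $Q_t$ vanishes at the three vectors $\sigma_0(t),\sigma_2(t),\sigma_4(t)$, which are pairwise non-parallel because ${\sigma_j(t)}\land{\sigma_{j+2}(t)}=\sqrt3/2\ne0$; a quadratic form on $\rR^2$ vanishing at three distinct points of the projective line is identically zero, so $Q_t\equiv0$. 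By the lemma of Section~\ref{sec:star}, $\det X(t)>0$, so $X(t)$ is invertible; substituting $w=X(t)v$, the identity $Q_t\equiv0$ becomes ${w}\land{\bigl((X'+X^2)X^{-1}w\bigr)}=0$ for all $w$, which forces $X'+X^2=\lambda X$ for some scalar function $\lambda$. Taking traces, and using $\op{tr}X\equiv0$ (hence $\op{tr}X'=0$) together with the Cayley--Hamilton identity $X^2=-\det(X)\,I$, we get $-2\det X=\op{tr}(X^2)=\op{tr}(X'+X^2)=\lambda\op{tr}X=0$, contradicting $\det X>0$.

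The one point needing care is that all of this takes place in the setting where the star conditions of Section~\ref{sec:star} hold, so that $\det X>0$ (equivalently, so that the denominator $3a+\sqrt3 c$ appearing in the proof of Lemma~\ref{lemma:curvature} is positive), and that ``line segment'' in Definition~\ref{def:rank} is read in the honest (positive length) sense, so that $\sigma_0,\sigma_2,\sigma_4$ really do have locally constant tangent directions. Without the star conditions the statement genuinely fails: one can produce a $C^\infty$ multi-curve consisting of three mutually parallel segments whose associated $X$ is a fixed nonzero nilpotent matrix. So the substantive step is pinning down exactly where convexity enters; once that is in place, the trace computation above finishes the argument in one line.
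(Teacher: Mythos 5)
Your proof is correct, and its main line---deducing the statement as a corollary of Lemma~\ref{lemma:curvature}, since a rank-zero multi-curve would have $\sigma_j'\land\sigma_j''\equiv 0$ for every $j$---is exactly the paper's primary route (the paper's proof literally opens with ``This is a corollary of Lemma~\ref{lemma:curvature}''); your quadratic-form and trace computation is a clean, coordinate-free repackaging of the explicit calculation inside that lemma's proof, reaching the same conclusion $\det X=0$ versus $\det X>0$. The paper also sketches a second, purely geometric direct proof that you do not use: if the rank is zero, the tangent lines to all six curves are fixed lines, so the balanced hexagon they determine is fixed, hence its midpoints---which are the points $\sigma_j(t)$ themselves---are fixed, and the curve degenerates to a stationary point, contradicting regularity. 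Your caveat about the star conditions is legitimate and worth recording: the definition of multi-curve carries no convexity constraint (the paper says so explicitly), and the constant-nilpotent-$X$ example you describe (e.g.\ $\phi(t)$ unipotent upper-triangular acting on a rotated multi-point with no horizontal $u_j$) really does produce a $C^\infty$ rank-zero multi-curve, so the lemma must be read with the convexity/star hypotheses of Section~\ref{sec:star} in force; note that the paper's geometric proof carries the same implicit assumption, since in the degenerate example consecutive tangent lines are parallel and no balanced hexagon is determined.
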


\begin{proof} This is a corollary of Lemma~\ref{lemma:curvature}.  A
  direct proof can be given as follows.  The tangent lines to a
  multi-curve, by the argument in the proof of
  Lemma~\ref{lemma:sl2-rein}, determines a balanced hexagon.  If the
  rank is zero, the tangent lines, the balanced hexagon, and its
  midpoints are fixed.  Thus, the curve degenerates to a stationary
  curve at the fixed midpoints.
\end{proof}

It is natural to consider hexameral domains whose boundary is
parametrized by a finite number of  analytic multi-curves.  

\begin{lemma} Suppose that a hexameral domain $K$ has a
  parametrization by a finite number of analytic multi-curves
  $\sigma$.  Then $K$ also admits a parametrization by a finite number
  of triple curves satisfying the hypotheses of
  Definition~\ref{def:rank}, each admitting a well-defined rank.
\end{lemma}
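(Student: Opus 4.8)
The plan is to exploit the rigidity of real-analytic functions. On any interval, the scalar function $g_j(t):=\sigma_j'(t)\land\sigma_j''(t)$ is real analytic, hence either vanishes identically or has only finitely many zeros on a compact subinterval; and the convexity constraint~(\ref{eqn:curvature''}) says exactly that $g_j\ge 0$. Identical vanishing of $g_j$ will force $\sigma_j$ to be a line segment, while a finite zero set can be removed by cutting the parameter interval at those points. So the lemma reduces to subdividing each of the finitely many given analytic pieces at finitely many parameter values.

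First I would make two reductions. By the central symmetry $\sigma_{j+3}(t)=-\sigma_j(t)$, the curve $\sigma_{j+3}$ is a line segment, resp.\ has nonvanishing curvature, exactly when $\sigma_j$ is; so it suffices to control $j\in\{0,2,4\}$, and once each of $\sigma_0,\sigma_2,\sigma_4$ satisfies one of the two alternatives of Definition~\ref{def:rank}, the rank of the piece is well defined. Second, each $\sigma_j$ is a \emph{regular} analytic curve: writing $\sigma_j'(t)=X(t)\sigma_j(t)$ as in~(\ref{eqn:Xt}) and applying an affine transformation so that at time $t$ we are in the circle representation, the star conditions of Section~\ref{sec:star} apply and give $\det X(t)>0$ (the determinant is unchanged under conjugation, so the normalization at time $t$ is harmless); since $0$ lies in the interior of $K$ we have $\sigma_j(t)\ne 0$, hence $\sigma_j'(t)=X(t)\sigma_j(t)\ne 0$.

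Now fix one analytic piece on $[t_0,t_1]$ and $j\in\{0,2,4\}$. If $g_j\equiv 0$, then using $\sigma_j'(t)\ne 0$ we get $\sigma_j''(t)\in\rR\,\sigma_j'(t)$ for all $t$, so $\sigma_j''=\lambda\,\sigma_j'$ for an analytic scalar $\lambda$; integrating, $\sigma_j'(t)=e^{\mu(t)}w$ with $\mu'=\lambda$ and $w\ne 0$ constant, so $\sigma_j$ traces a nondegenerate segment of the line through $\sigma_j(t_0)$ in direction $w$. If instead $g_j\not\equiv 0$, then $g_j$ has only finitely many zeros in $[t_0,t_1]$. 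Let $Z$ be the union, over the finitely many given pieces and over $j\in\{0,2,4\}$, of all these zero sets. Subdividing every parameter interval at the points of $Z$ yields finitely many new indexed families of curves: each is again a multi-curve, since the relations~(\ref{eqn:uA}) are preserved under restriction; each is again analytic, in particular $C^2$; and the sub-arcs they parametrize still cover the boundary of $K$. On a new piece $(t_0',t_1')$ and for each $j\in\{0,2,4\}$, either $g_j\equiv 0$ there (so $\sigma_j$ is a line segment) or $g_j$ is nonzero throughout the open interval $(t_0',t_1')$ --- i.e.\ the curvature of $\sigma_j$ is nonzero there --- since every zero of $g_j$ has been cut out and now occurs only as an endpoint. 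By the first reduction the same holds for $j\in\{1,3,5\}$, so each new multi-curve satisfies the hypotheses of Definition~\ref{def:rank} and has a well-defined rank, and there are finitely many of them.

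The only genuinely delicate point is the regularity of the $\sigma_j$: it is what upgrades the analytic identity $\sigma_j'\land\sigma_j''\equiv 0$ to the geometric statement that $\sigma_j$ is a nondegenerate line segment, and, via $\det X>0$, prevents any $\sigma_j$ from collapsing to a point. Apart from this, the argument is purely formal --- it is the statement that a nonzero real-analytic function on a compact interval has finitely many zeros, invoked three times per analytic piece --- so I do not anticipate further obstacles.
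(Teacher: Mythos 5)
Your proof is correct and follows essentially the same route as the paper: the curvature function of each analytic arc either vanishes identically or has finitely many zeros, and one subdivides the finitely many parameter intervals at those zeros. The extra care you take (regularity of $\sigma_j$ via $\det X>0$, and upgrading $\sigma_j'\land\sigma_j''\equiv 0$ to ``line segment'') fills in details the paper leaves implicit but does not change the argument.
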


\begin{proof} The curvature of an analytic curve vanishes identically,
or has at most finitely many zeroes on a compact interval $[t_0,t_1]$.
Subdividing the intervals at the finitely many zeroes, we may assume
that the only zeroes appear at the endpoints of the intervals.
\end{proof}

\section{Rank Three and the Ellipse}

The following sections analyze the multi-curves according to rank,
starting with rank three in this section.
The primary method will be the calculus of variations  
to search for a curve $\phi(t)$ in $SL_2(\ring{R})$ that 
minimizes the area of a hexameral domain $K$.  

\subsection{first variation}

We consider a curve $\phi:[t_0,t_1]\to SL_2(\ring{R})$.
Form corresponding curves $\sigma_j(t) = \phi(t) u_j$ and
$\sigma_{j+3}(t) = -\sigma_j(t)$, for $j\in\ring{Z}/6\ring{Z}$ and
$u_j$ satisfying Conditions~(\ref{eqn:uA}).  Consider the closed curve
that follows the line segment from $(0,0)$ to $\sigma_j(t_0)$, the
curve $\sigma_j(t)$ from $t_0\le t\le t_1$, and then the line segment
from $\sigma_j(t_1)$ to $(0,0)$.  Assume that this closed curve is
simple, and let $I_j$ be the area enclosed by the curve.  Set
\[
I(\phi) =\sum_{j=0}^5 I_j.
\]

Let 
\[
\phi(t) = \phi(t_0)\cdot 
\begin{pmatrix} \alpha(t) & \beta(t) \\ \gamma(t) & \delta (t) \end{pmatrix}.
\]
If we express the integrals $I_j$ in terms of $\phi$, a short calculation gives
\begin{equation}\label{eqn:area-int}
I(\phi) = 3\int_{t_0}^{t_1} 
(\alpha d\gamma - \gamma d\alpha) + (\beta d\delta - \delta d\beta).
\end{equation}

\begin{lemma} Let $K$ be a miserly domain.  Pick a $C^1$
  parametrization $\phi(t)\in SL_2(\ring{R})$ of the boundary of $K$.
  Suppose that for all $t\in[t_0,t_1]$, the multi-curve associated
  with the curve $\phi$ has a well-defined rank $3$.  Then the first
  variation of $I(\phi)$ (with fixed boundary conditions) vanishes.
\end{lemma}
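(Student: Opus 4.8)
The plan is to exploit the fact that a miserly domain is, in particular, a minimizer of the area functional $I(\phi)$ over an appropriate class of competitors, and that a rank-$3$ multi-curve sits in the \emph{interior} of the configuration space, so that ordinary (unconstrained) first-variation reasoning applies. First I would recall that by Lemma~\ref{lemma:mid-min} and Formula~(\ref{eqn:density}), since the balanced hexagon has been normalized to have area $\sqrt{12}$, minimizing $\deltalat(K)$ over hexameral domains is the same as minimizing $\aa(K)$; and by the six-fold symmetry together with Lemma~\ref{lemma:8G}, $\aa(K)$ is a fixed positive multiple of $I(\phi)$ (the closed curves bounding the $I_j$ fit together around the origin to give $K$ traversed with multiplicity, the multiplicity being absorbed into the constant). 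Hence a miserly domain minimizes $I(\phi)$. The key point is that the competitor curves here are curves $\phi:[t_0,t_1]\to SL_2(\ring{R})$ with prescribed endpoints $\phi(t_0),\phi(t_1)$, and $I(\phi)$ is given by the explicit integral~(\ref{eqn:area-int}), which is a classical first-order variational integral in the matrix entries $\alpha,\beta,\gamma,\delta$ subject only to the single pointwise constraint $\alpha\delta-\beta\gamma=1$.

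Next I would argue that the rank-$3$ hypothesis guarantees that no inequality constraint is active, so the minimization is genuinely unconstrained (apart from the equality constraint $\det=1$, which is handled by working on the manifold $SL_2(\ring{R})$, equivalently by a Lagrange multiplier). Concretely: the admissible $\phi$ must yield a convex $K$, which by~(\ref{eqn:curvature''}) means $\sigma_j'(t)\land\sigma_j''(t)\ge 0$ for all $j$ and (a.e.)\ $t$; when the rank is $3$, every curve $\sigma_j$ ($j\in 2\rZ/6\rZ$) has \emph{nonzero} curvature on the open interval $(t_0,t_1)$, so each of these inequalities is strict in the interior. Therefore, for any $C^1$ variation $\phi_\epsilon$ with $\phi_0=\phi$ and fixed endpoints, the convexity inequalities remain satisfied for $|\epsilon|$ small (on any compact subinterval of $(t_0,t_1)$, uniformly; near the endpoints the fixed boundary conditions pin the curve), and similarly the simple-closed-curve hypothesis used to define $I_j$ persists under small perturbation. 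Thus $\phi$ is a genuine interior local minimum of $I$ among curves in $SL_2(\ring{R})$ with fixed endpoints, and the Euler--Lagrange first-order condition $\delta I(\phi)=0$ follows by the fundamental lemma of the calculus of variations applied to~(\ref{eqn:area-int}).

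The main obstacle, and the step I would treat most carefully, is the behavior at the endpoints $t_0,t_1$ and the bookkeeping that identifies $I(\phi)$ with the area of the domain. A rank-$3$ multi-curve parametrizes only a piece of $\partial K$; its curvature is allowed to vanish at the endpoints, precisely where it may join a neighboring multi-curve of a different rank. So I must be sure that: (i) admissible variations are taken with $\phi(t_0),\phi(t_1)$ fixed, so the perturbed arc still abuts the rest of the boundary without creating corners or breaking convexity at the seam; (ii) the remaining parts of $\partial K$ contribute a constant to $\aa(K)$ under such variations, so that stationarity of $\aa(K)$ reduces to stationarity of the single integral $I(\phi)$; and (iii) the decomposition of $\aa(K)$ into the sum $\sum I_j$ of signed areas of the ``pie-slice'' regions is valid, which uses the central symmetry $\sigma_{j+3}=-\sigma_j$ and the balanced-hexagon structure from Lemma~\ref{lemma:sl2-rein} to guarantee the slices tile a neighborhood of the boundary consistently. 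Once these points are in place, vanishing of the first variation is immediate from~(\ref{eqn:area-int}); the geometric packaging, not the calculus, is where the care is needed.
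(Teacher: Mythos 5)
Your proposal is correct and matches the paper's argument: both exploit that rank $3$ makes the curvature inequalities strict on the open interval, so that small (compactly supported) variations preserve convexity and remain boundaries of hexameral domains via Lemma~\ref{lemma:sl2-rein}, whence a nonzero first variation of $I(\phi)$ would contradict minimality via Lemma~\ref{lemma:mid-min}. The only nuance is that near the endpoints one should restrict to variations supported in compact subintervals of $(t_0,t_1)$ (as the paper implicitly does), rather than relying on the fixed endpoint values alone to control convexity there.
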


\begin{proof} Assume for a contradiction that the first variation is
  nonzero. Under the conditions necessary for the rank to be defined,
  on any compact interval $[s_0,s_1]$ with $t_0 < s_0 < s_1 < t_1$,
  the curvatures of the curves $\sigma_j$ are bounded away from zero.
  Thus, a sufficiently small $C^\infty$-variation of the functional
  preserves the convexity condition.  We can assume the small
  variation gives a simple curve.  By Lemma~\ref{lemma:sl2-rein}, the
  small variation is again the boundary of a hexameral domain.  If the
  first variation is nonzero, the area of can be decreased, holding
  the area of the balanced hexagon constant.  By
  Lemma~\ref{lemma:mid-min}, $K$ is not a miserly domain.
\end{proof}

Basic results about variations now imply that the Euler-Lagrange
equations must hold on $(t_0,t_1)$.  As we are working in
$SL_2(\ring{R})$, we may take the variation of the form
  \begin{equation}\label{eqn:ell-var}
   \exp(\epsilon X(t))\cdot \phi(t)
   \end{equation}
for some curve 
\begin{equation}\label{eqn:X}
 X(t)  = \begin{pmatrix} u(t) & w(t)+v(t) \\ w(t)-v(t) & - u(t)\end{pmatrix}
 \in \mathfrak{sl}_2(\ring{R}),
\end{equation}
the Lie algebra of $SL_2(\ring{R})$.
%\[
%\begin{array}{lll}
%0 &= (\delta^2 + \gamma^2)'\\
%0 &= (\alpha^2 + \beta^2)'\\
%0 &= (\gamma \alpha + \delta\beta)'
%\end{array}
%\]
The Euler-Lagrange equations are
\[
\begin{array}{lll}
0 &= (\delta^2 + \gamma^2)'\\
0 &= (\alpha^2 + \beta^2)'\\
0 &= (\gamma \alpha + \delta\beta)'\\
\end{array}
\]
with boundary conditions
\[
\begin{array}{lll}
1 &= \alpha(t_0) = \delta(t_0),\\
0 &= \beta(t_0) = \gamma(t_0)\\
\end{array}
\]
Integrating, we have 
\[
\begin{array}{lll}
 1 &= \delta^2 + \gamma^2\\
 1 &= \alpha^2 + \beta^2\\
 0 &= \gamma \alpha + \delta \beta.
\end{array}
\]
We also have the determinant condition $\alpha\delta-\beta\gamma=1$.
These are the defining conditions of a special orthogonal matrix.
Hence, there is a function $\theta:[t_0,t_1]\to\ring{R}$ such that
\[
\phi(t) = \phi(t_0)\cdot \begin{pmatrix} \cos\theta(t) & -\sin\theta(t) \\ \sin\theta(t) &
  \cos\theta(t) \end{pmatrix}.
\]
Thus, the curves $\sigma_j$ trace out arcs of an ellipse.
We summarize in the following lemma.

\begin{lemma} Let $K$ be a miserly domain.  Suppose that some portion
  of the boundary has well-defined rank $3$.  Then up to a special
  linear transformation, that portion of the boundary consists of
  three arcs of a unit circle.
\end{lemma}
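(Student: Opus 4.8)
The plan is to read this off from the variational analysis just carried out, organized into the following steps. First I would combine the two hypotheses: $K$ is miserly and the portion of the boundary in question has well-defined rank $3$. By the lemma just proved, these imply that the first variation of $I(\phi)$ vanishes on $[t_0,t_1]$ with the boundary values held fixed, so $\phi$ satisfies the Euler--Lagrange equations on the open interval $(t_0,t_1)$. To make this explicit I would apply a special linear transformation normalizing $\phi(t_0)$, write
\[
\phi(t) = \phi(t_0)\begin{pmatrix}\alpha(t)&\beta(t)\\\gamma(t)&\delta(t)\end{pmatrix},
\]
and use admissible variations $\exp(\epsilon X(t))\,\phi(t)$ with $X(t)\in\mathfrak{sl}_2(\rR)$. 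These stay in $SL_2(\rR)$, and because the rank is $3$ the curvatures of all three curves $\sigma_j$ are bounded away from zero on compact subintervals, so for small $\epsilon$ the varied curve remains convex and, by Lemma~\ref{lemma:sl2-rein}, still bounds a hexameral domain. Differentiating the area formula $I(\phi)=3\int_{t_0}^{t_1}(\alpha\,d\gamma-\gamma\,d\alpha)+(\beta\,d\delta-\delta\,d\beta)$ in the three independent directions of $X$ produces the Euler--Lagrange system $(\delta^2+\gamma^2)'=0$, $(\alpha^2+\beta^2)'=0$, $(\gamma\alpha+\delta\beta)'=0$.

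Next I would integrate these three conservation laws against the boundary data $\alpha(t_0)=\delta(t_0)=1$, $\beta(t_0)=\gamma(t_0)=0$, obtaining $\delta^2+\gamma^2=1$, $\alpha^2+\beta^2=1$, $\gamma\alpha+\delta\beta=0$; together with the determinant constraint $\alpha\delta-\beta\gamma=1$ these say exactly that the matrix has orthonormal rows and determinant one, i.e.\ lies in $SO(2)$. Hence $\phi(t)=\phi(t_0)R(\theta(t))$ for a continuous angle $\theta$, where $R(\theta)$ denotes rotation by $\theta$, and therefore each curve $\sigma_j(t)=\phi(t)u_j$ sweeps an arc of the ellipse that is the image of the unit circle under $\phi(t_0)$. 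Finally, passing to the circle representation by the additional special linear map carrying the multi-point $u$ to the sixth roots of unity $u^*$ turns this ellipse into the unit circle, so the portion of the boundary is precisely the three arcs traced by $\sigma_0,\sigma_2,\sigma_4$ on that circle (the curves $\sigma_1,\sigma_3,\sigma_5$ tracing the antipodal arcs by central symmetry).

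I do not expect a serious obstacle, since the real content — vanishing of the first variation — is supplied by the previous lemma. The one point deserving care is the admissibility of the variations: one must verify that $\exp(\epsilon X)\phi$ respects the convexity constraint $\sigma_j'\land\sigma_j''\ge 0$, and this is exactly where the rank-$3$ assumption enters, since it forces all the curvatures to be strictly positive in the interior, so a sufficiently small smooth perturbation cannot destroy convexity. Everything else is routine calculus-of-variations bookkeeping plus the elementary fact that a $C^1$ curve in $SL_2(\rR)$ whose rows stay orthonormal must lie in $SO(2)$.
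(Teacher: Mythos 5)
Your proposal is correct and follows essentially the same route as the paper: invoke the preceding lemma for the vanishing of the first variation, use variations of the form $\exp(\epsilon X(t))\phi(t)$ to derive the Euler--Lagrange system $(\delta^2+\gamma^2)'=(\alpha^2+\beta^2)'=(\gamma\alpha+\delta\beta)'=0$, integrate against the boundary data and the determinant condition to conclude $\phi(t)=\phi(t_0)R(\theta(t))\in\phi(t_0)\,SO(2)$, and then pass to the circle representation to turn the resulting elliptical arcs into arcs of the unit circle. Your added remark on why the rank-$3$ hypothesis guarantees admissibility of small variations (curvatures bounded away from zero on compact subintervals) is exactly the point the paper makes in the proof of the preceding lemma.
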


\subsection{second variation}

Next we study the second variation of the area functional $I(\phi)$.  For this,
we may confine our attention to the unit circle:
\[
\phi(t) = \begin{pmatrix} \cos t & -\sin t \\ \sin t & \cos t
\end{pmatrix}.
\]
Again, we consider variations of the form of
Equations~(\ref{eqn:ell-var}) and (\ref{eqn:X}).  A calculation of the
second variation around $\phi$ gives
\[
\int_{t_0}^{t_1} 4 u(t) w'(t) dt.
\]
%Where $w_1 = w+v$.
This does not have fixed sign.  Therefore, the solution to the
Euler-Lagrange equations is a saddle point.  It follows that an arc of
a circle cannot form part of a miserly domain.  This gives the
following result.

\begin{thm}  Let $K$ be a miserly domain.  Then there is no segment
of the boundary parametrized by a multi-curve of  rank  three.
\end{thm}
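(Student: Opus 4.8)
The plan is to assemble the theorem as a contradiction from the three results of this section: the vanishing of the first variation on a rank-three arc, the Euler--Lagrange reduction to the unit circle, and the sign-indefiniteness of the second variation just computed. Suppose, for contradiction, that some arc of the boundary of the miserly domain $K$ is parametrized on an interval $[t_0,t_1]$ by a multi-curve of well-defined rank three. By the lemma on the first variation, the first variation of $I(\phi)$ vanishes on $[t_0,t_1]$, so the Euler--Lagrange equations hold there; by the subsequent lemma, after applying a suitable element of $SL_2(\ring{R})$ we may assume that $\phi$ is the rotation-matrix curve used in the second-variation calculation above, so that the arc consists of three arcs of the unit circle and that calculation applies verbatim.

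Next I would convert the indefiniteness of the second variation into an admissible competitor of strictly smaller area. Fix a compact subinterval $[s_0,s_1]\subset(t_0,t_1)$; by Definition~\ref{def:rank} the curvature of each $\sigma_j$ is bounded away from $0$ on $[s_0,s_1]$. Choose a nonzero $w\in C^\infty(\ring{R})$ supported in $(s_0,s_1)$, put $v\equiv 0$ and $u=-w'$, form $X(t)$ as in Equation~(\ref{eqn:X}), and consider the variation $\exp(\epsilon X(t))\,\phi(t)$. Since $X$ takes values in $\mathfrak{sl}_2(\ring{R})$, the perturbed curve again lies in $SL_2(\ring{R})$ and hence is again a multi-curve, and since $X$ vanishes outside $[s_0,s_1]$ the endpoint data at $t_0,t_1$ are unchanged. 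For $|\epsilon|$ small, the curvature bound (together with the convexity of $K$ along the unperturbed part of the boundary) keeps the perturbed curve convex and, after an arbitrarily small further adjustment, simple; by Lemma~\ref{lemma:sl2-rein} it is then the boundary of a hexameral domain, whose balanced hexagons retain the fixed area $\sqrt{12}$. With $u=-w'$, the second variation equals $\int_{t_0}^{t_1}4\,u(t)w'(t)\,dt=-4\int (w')^2\,dt<0$, so $I(\exp(\epsilon X)\,\phi)<I(\phi)$ for all small $\epsilon\neq 0$.

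Finally, because the endpoint data are fixed, a decrease in $I(\phi)$ is a decrease in $\aa(K)$, while $\aa(G)$ is fixed by the multi-curve normalization; by Formula~(\ref{eqn:density}) and Lemma~\ref{lemma:mid-min}, the perturbed domain then has packing density strictly below $\delt$, which is absurd. Hence no arc of the boundary of a miserly domain can have well-defined rank three. The substantive input, the indefinite sign of $\int 4\,u\,w'\,dt$, is already in hand, so the \emph{only} point requiring care is the admissibility bookkeeping of the middle paragraph: that a variation supported on a compact subinterval of $(t_0,t_1)$ yields a convex, simple, hexameral competitor. This is exactly the mechanism used to prove the vanishing of the first variation, so it needs no new idea; I expect it to be the main (and essentially only) obstacle in writing the argument out cleanly.
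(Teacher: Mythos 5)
Your proposal is correct and follows essentially the same route as the paper: vanishing first variation on a rank-three arc, Euler--Lagrange reduction to the circle, and then exploiting the indefiniteness of the second variation $\int 4uw'\,dt$ (your explicit choice $u=-w'$ with compactly supported $w$ is a valid way to realize a strictly negative second variation while the admissibility bookkeeping works exactly as in the first-variation lemma). The paper states this more tersely --- ``the solution to the Euler--Lagrange equations is a saddle point'' --- but the mechanism is the same.
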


\section{Rank Two}

In this section, we consider the variation of a Reinhardt curve of
well-defined rank two.  We prove that the first variation is never
zero in the rank two situation.  This leads to the following theorem.

\begin{thm} Let $K$ be a miserly domain.  Then no segment of the
  boundary is parametrized by a rank two multi-curve.  In fact, the
  first variation in area, along a rank-preserving variation, is
  always non-zero (on the space of multi-curves).
\end{thm}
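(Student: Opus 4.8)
The plan is to reduce an arbitrary rank-two multi-curve to a \emph{single} scalar unknown, write the area functional in that unknown, and observe that the Euler--Lagrange equation of the resulting one-variable problem forces $\sigma_2$ to straighten; thus no rank-two configuration is critical for rank-preserving variations, and a miserly domain can carry no rank-two arc.

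By the cyclic symmetry $j\mapsto j+2$ of the conditions (\ref{eqn:uA}) we may assume the straight curve is $\sigma_4$ (hence also $\sigma_1=-\sigma_4$), while $\sigma_0$ and $\sigma_2$ have nonvanishing curvature on $(t_0,t_1)$. Let $L$ be the line carrying the segment $\sigma_4$. Since $\sigma_4(t)\land\sigma_0(t)=\sqrt3/2\ne0$, the segment is nondegenerate, and (when $K$ is a domain, $L$ is a line of support, so) $0\notin L$; applying an element of $SL_2(\ring{R})$ we may assume $L=\{(x,y):y=1\}$, so that $\sigma_4(t)=(f(t),1)$ with $f$ monotone and $C^1$. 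Substituting this into $\sigma_2\land\sigma_4=\sigma_4\land\sigma_0=\sqrt3/2$ and $\sigma_0+\sigma_2+\sigma_4=0$ and solving the resulting linear relations shows that the whole multi-curve is determined by $f$ together with the single scalar function $g$, defined as the second coordinate of $\sigma_2$:
\[
\sigma_2=(\tfrac{\sqrt3}{2}+fg,\; g),\qquad
\sigma_0=(-f(1+g)-\tfrac{\sqrt3}{2},\; -(1+g)),
\]
the remaining identity $\sigma_0\land\sigma_2=\sqrt3/2$ then holding automatically. (The degenerate case $0\in L$, i.e.\ $\sigma_4(t)=(f(t),0)$, is entirely similar and in fact easier: there the analogous first-variation condition forces $f$ to be constant, whence $\sigma_4$ is a point and $\sigma_0$ lies on a fixed line.)

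I would then compute $I(\phi)=\sum_{j=0}^{5}I_j=2(I_0+I_1+I_2)$, where $I_j=\tfrac12\int_{t_0}^{t_1}\sigma_j\land\sigma_j'\,dt$ by Green's theorem (the two radial segments contribute nothing). A short calculation from the formulas above gives
\[
\sigma_0\land\sigma_0'+\sigma_1\land\sigma_1'+\sigma_2\land\sigma_2'=\sqrt3\,g'-f'\,(2g^2+2g+2),
\]
so that, for a constant $C$ depending only on the boundary data,
\[
I \;=\; C \;-\; \int_{t_0}^{t_1}f'(t)\,(2g(t)^2+2g(t)+2)\,dt .
\]
Consider now a rank-preserving, fixed-boundary variation that leaves $f$ unchanged and replaces $g$ by $g+\epsilon\xi$, with $\xi\in C^\infty$ supported in a compact subinterval $[s_0,s_1]\subset(t_0,t_1)$ on which the (continuous) curvatures of $\sigma_0$ and $\sigma_2$ are bounded away from $0$; for small $\epsilon$ this keeps $\sigma_4$ unchanged, keeps the rank equal to two, and --- where it is relevant --- keeps the curve convex and simple, exactly as in the rank-three argument. (The $(f,g)$-parametrization is what shows this is a genuine fixed-endpoint curve in $SL_2(\ring{R})$.) Its first variation is
\[
\delta I=-\int_{t_0}^{t_1}f'(t)\,(4g(t)+2)\,\xi(t)\,dt .
\]
If $\delta I=0$ for every such $\xi$ and every such $[s_0,s_1]$, then $f'(t)\,(4g(t)+2)\equiv0$ on $(t_0,t_1)$. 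But $f$ is not constant on all of $(t_0,t_1)$ --- otherwise $\sigma_4$ is a point and, by the displayed formula, $\sigma_0$ lies on a fixed line, contradicting its nonvanishing curvature --- so $f'\ne0$ on some subinterval, where therefore $g\equiv-\tfrac12$, and there $\sigma_2=(\tfrac{\sqrt3}{2}-\tfrac{f}{2},\; -\tfrac12)$ is a straight segment, contradicting the rank-two hypothesis. This proves the stronger (``in fact'') assertion. The first statement follows at once: if some boundary arc of a miserly domain $K$ were of rank two, then by Lemma~\ref{lemma:sl2-rein} the above variation keeps the boundary that of a hexameral domain (and, by the subinterval device, convex and simple), so $\aa(K)$ could be strictly decreased while the area of the balanced hexagon is held fixed, contradicting Lemma~\ref{lemma:mid-min}.

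I expect the main obstacle to lie not in the variational calculus itself --- the decisive point, that the Euler--Lagrange equation of the reduced one-function problem straightens $\sigma_2$, so that a rank-two critical configuration would in fact be rank one, drops out immediately --- but in the surrounding bookkeeping: verifying carefully that varying $g$ alone is a legitimate fixed-boundary, rank- and convexity-preserving $SL_2(\ring{R})$-variation; controlling the behaviour near the endpoints $t_0,t_1$ as in the rank-three case; and disposing cleanly of the degenerate normalization $0\in L$.
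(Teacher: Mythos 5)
Your proof is correct and follows essentially the same route as the paper: normalize by $SL_2(\ring{R})$ so that the straight arc lies on a horizontal line, reduce the rank-two multi-curve to two scalar functions, compute the area Lagrangian explicitly, and show that its first variation cannot vanish without forcing a second arc to straighten, contradicting rank two. The only real difference is which coordinate is varied --- the paper perturbs the skew parameter $x$ (the position of the straight arc along its line) and gets $s s'=0$ against $s'>0$, while you perturb the height $g$ of $\sigma_2$ holding the straight arc fixed, which conveniently sidesteps the convexity-preservation issue at the zero-curvature arc.
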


\begin{proof} By applying a special linear transformation, we may
  assume that $\sigma_0$ moves along the line $y=-1$.  At any given
  time $t$, there is a skew transformation
  \def\xm{\left(\begin{matrix} 1& x\\0&1\end{matrix}\right)}
  \def\vc#1#2{\left(\begin{matrix}#1\\#2\end{matrix}\right)}
\[
\xm
\]
that makes the first coordinates of $\sigma_1(t)$ and $\sigma_2(t)$
equal.  The vertices of the midpoint hexagon (after this
transformation) are
\[
\pm (r-z,-1), \quad \pm (r,s),\quad \pm (r-z,1),
\]
with $r>z>0$, and $-1<s<1$.  This hexagon has area $\sqrt{12}$
provided
\[
z = 2 r - 2 \rho,
\]
where $\rho = \sqrt3/2$.  We regard $r$ as a function of $z$ through
this relation. We may solve for the midpoints of this
hexagon, then reapply the skew transformation to obtain the
coordinates of $\sigma_j$.  We then have
\[
\begin{array}{lll}
\sigma_1(t) &= \frac12\xm\cdot\vc {2\rho}{-1+s},\\
\sigma_2(t) &= \frac12\xm\cdot\vc {2\rho}{+1+s}.\\
\end{array}
\]
The condition that $\sigma$ is a multi-curve forces the tangent to $\sigma_2$
to be an edge of the midpoint hexagon:
\begin{equation}\label{eqn:z}
x' (s^2 - 1) - s' z = 0.
\end{equation}
If $s'(t)=0$, then $x'(t)=0$ and the curve $\sigma_1$ is not regular
at $t$, which is contrary to the definition of a multi-curve.  Thus,
$s'$ is everywhere nonzero, and we can define $z$ in terms of $x$ and
$s$ by the relation (\ref{eqn:z}).  We can pick the sign of $s'$ so
that it is positive.  We have $z >0$.  Then $x'$ is
everywhere negative.

We let $I_j$ be the area bounded by the three curves: the segment from
$0$ to $\sigma_j(t_0)$, the curve $\sigma_j$ on $[t_0,t_1]$, and the
segment from $\sigma_j(t_1)$ to $0$.  A short calculation gives
\[
I_1 + I_2 = \int_{t_0}^{t_1} \frac{1}4 (\sqrt{3} s' - (1+s^2) x')\,dt.
\]
The integral $I_0$ is not relevant for a compactly supported variation
since $\sigma_0$ remains linear under any small rank-preserving
variation, and $I_0$ remains constant.  The first variation of this integral in $x$ clearly does
not vanish, under the established condition $s'>0$.
\end{proof}

\section{Hyperbolic Links (Rank One)}

We have shown that the boundary of a miserly domain cannot
contain multi-curves of rank $\ne 1$.  This section analyses triple
curves of rank one.

\subsection{square representation}

Let $K$ be a hexameral domain with boundary parametrized by a
multi-curve $\sigma$.  Suppose that for some fixed index $j$, the curves
$\sigma_{j+2}$ and $\sigma_{j+4}$ travel along straight lines.  By
applying a special linear transformation, we may assume that
$\sigma_{j+2}$ moves along $x=a$ and $\sigma_{j+4}$ moves along $y=a$,
for some $a>0$.  By reparametrizing the curves, we may assume that
$\sigma_{j+2}(t) = a(1,t)$ and $\sigma_{j+4}(t) = a(s(t),1)$, for some
function $s$.  The fixed area condition on balanced hexagons,
\begin{equation}
{\sigma_{j+2}(t)}\land{\sigma_{j+4}(t)}=\frac{\sqrt{3}}2,
\end{equation}
gives $1- t s = k$, where $k=\sqrt3/(2a^2) > 0$.  This determines the
function $s$.  The condition $\sigma_{j} = -\sigma_{j+2}
-\sigma_{j+4}$ gives $\sigma_{j}(t) = a (-1-s,-1-t)$.  The curve
$\sigma_{j}$ traces the hyperbola $(x+a)(y+a) = a^2(1-k)$, whose
asymptotes are lines $x=-a$ and $y=-a$ containing the curves
$\sigma_{j+5}$ and $\sigma_{j+1}$.  (This calculation shows why
hyperbolic arcs play a special role.)  The curve traced by
$\sigma_{j}$ does not form the arc of a convex region containing the
origin unless
   \begin{equation}
   0 < k < 1,\quad s<0,\quad t < 0.
   \end{equation}
   We assume this condition.  Also, $k<1$ implies $a^2 > \sqrt3/2$.
   The balanced hexagon $G$ degenerates to a quadrilateral when $s \le
   -1$ or $t\le -1$.  We therefore assume that
\begin{equation}
-1<s < 0,\qquad -1 < t < 0.
\end{equation}
  This implies that
\begin{equation}
-1 < s < -t s = k-1,\qquad -1 < t < k-1.
\end{equation}  
The parameter $t$ thus ranges
over an interval $[t_0,t_1]$ with $-1 < t_0 < t_1 < k-1$.  The parameter
$s$ runs over $[s_0,s_1]$ with $s_i = (1-k)/t_i$.  We may
write 
\begin{equation}
t_1 = t_0 + \ta (k-1-t_0),
\end{equation}
for some $\ta\in (0,1)$.

In summary, up to a special linear transformation, and
reparametrization of the curve, a rank one multi-curve is uniquely
determined by the index $j$ of the hyperbolic arc, the initial
parameters $(a,t_0)$, and the terminal parameter $\ta$, where
  \begin{equation}\label{eqn:atu}
  a > \sqrt{3}/2,\quad  k = \sqrt{3}/(2a^2),\quad -1 < t_0 < k - 1,
  \quad 0 < \ta < 1.
  \end{equation}
  We call this the {\it square representation} of the multi-curve.
  The starting and terminal point $s_i$ for the curve
  $\sigma_{j+4}(t_i) = a(s_i,1)$ and the constant $k$ are determined
  by Equation~(\ref{eqn:AB}).  The curve $\sigma_{j}$, is determined
  by $\sigma_{j+2}$ and $\sigma_{j+4}$.

Conversely, if we are given three parameters $a,t_0,\ta$ satisfying
the conditions (\ref{eqn:atu}), and given the hyperbolic index $j$, 
there exists a multi-curve whose
square representation has these parameters.  This gives us a convenient
way to construct multi-curves.  

\subsection{area}

Let $I_j(a,t_0,t_1)$ be the area bounded by the line segment from
$(0,0)$ to $\sigma_j(t_0)$, the curve $\sigma_j$, $t_0\le t\le t_1$,
and the line segment from $\sigma_j(t_1)$ to $(0,0)$.  An easy
calculation gives
\begin{equation}\label{eqn:I}
\begin{array}{lll}
 I(a,t_0,t_1) &=
  a^2((s_0-s_1)+(t_1-t_0) - (1-k)\ln (s_1/s_0))\\
 &= a^2 ((1-k) (1/t_0-1/t_1) + (t_1-t_0) -(1-k) \ln (t_0/t_1)),
\end{array}
\end{equation}
where 
$I=I_0+I_2+I_4,$ and the parameters $s_1,s_2,k$ are given in terms
of $a,t_0,t_1$ as above.

For example, the boundary of the 
smoothed octagon is parametrized by eight multi-curves (one for
each hyperbolically rounded corner of the octagon).  The parameters
for each of the eight multi-curves of the smoothed octagon are
\[
\begin{array}{lll}
a &= \frac{12^{1/4}}{\sqrt{4-\sqrt{2}}},\\
t_0 &= -1/\sqrt{2},\\
t_1 &= -1/2,\\
I &= \frac{\sqrt{3} \left(8-8 \sqrt{2}+\sqrt{2} \log (2)\right)}{4
   \left(-4+\sqrt{2}\right)}\\
\end{array}
\]
This gives density
\[8 I/\sqrt{12} \approx 0.902414,\]
mentioned in the introduction to this article.

\subsection{the set of initial states}

The initial state for a multi-curve is specified by a matrix
$\phi(t_0)\in SL_2(\ring{R})$ and a velocity
$X(t_0)\in\mathfrak{sl}_2(\ring{R})$, given by
Equation~(\ref{eqn:Xt}).  We wish to allow reparametrizations of the
curve, so that the velocity is given only up to a scalar, giving a
point in projective space:
$[X(t_0)]\in\ring{P}(\mathfrak{sl}_2(\ring{R}))$.  The space of
initial states then has dimension five:
\[
\dim\, S = 3 + 2,\quad\hbox{ where } 
S = SL_2(\ring{R}) \times\ring{P}(\mathfrak{sl}_2(\ring{R})).
\]
These five dimensions correspond to the three dimensional group of
transformations that can be used to transform a rank-one multi-curve
to its square representation, together with the two parameters $(a,t_0)$
giving the initial state in the square representation.

Likewise, the terminal state for a multi-curve is given by a point
in the same five dimensional space.

\subsection{hyperbolic chains and smoothed polygons}

\begin{definition}
  An analytic multi-curve of rank one is called a {\it hyperbolic
    link} (because the of hyperbolic arc $\sigma_j$).  A piecewise
  analytic multi-curve of rank one is called a {\it hyperbolic chain}.
  A hexameral domain $K$ whose boundary is a hyperbolic chain is is
  called a smoothed polygon.
\end{definition}

We are now in a position to state the main result of this article.

\begin{thm}  Let $K$ be a miserly domain.  If the boundary of $K$ is piecewise analytic,
then $K$ is a smoothed polygon.
\end{thm}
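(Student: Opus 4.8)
The plan is to assemble the structural results already proved for rank three and rank two, together with the curvature lemma, to force every piece of a piecewise-analytic miserly boundary to be rank one. First I would invoke the reduction that turns a piecewise analytic hexameral domain into one parametrized by finitely many analytic multi-curves, each with a well-defined rank (Definition~\ref{def:rank}), using the fact that the curvature of an analytic curve either vanishes identically or has finitely many zeros; subdividing at those zeros we may assume each subinterval carries a multi-curve of well-defined rank $k\in\{0,1,2,3\}$. By Lemma~\ref{lemma:curvature} (no multi-curve has rank $0$), by the rank-three theorem (no segment of the boundary of a miserly domain has rank three), and by the rank-two theorem (no segment has rank two), every one of these finitely many pieces must have rank exactly one. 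Hence the boundary of $K$ is a concatenation of finitely many analytic multi-curves of rank one, i.e.\ hyperbolic links, which is by definition a hyperbolic chain.

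Next I would identify the local structure of each rank-one piece with the square representation. By the analysis in Section~\ref{sec:star} and the square-representation construction, a rank-one multi-curve has, for some index $j$, the two curves $\sigma_{j+2}$ and $\sigma_{j+4}$ traveling along straight lines while $\sigma_j$ traces an arc of a hyperbola whose asymptotes extend those two lines; in the original coordinates on $K$ this means the boundary, over that parameter interval, consists of two linear segments (traced by $\sigma_{j+2}$ and $\sigma_{j+4}$, equivalently the corresponding edges of the hexameral domain) joined by a hyperbolic arc (traced by $\sigma_j$). Running $j$ over the three even residues as one moves around the boundary, the linear pieces from neighboring links overlap and glue into maximal straight segments, and the hyperbolic arcs sit between consecutive segments. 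Because the boundary has no corners (Lemma~\ref{lemma:221}), the hyperbolic arc must meet each adjacent segment tangentially, so the asymptote condition forces the arc to be exactly the hyperbola clipping that corner, with asymptotes along the two adjacent edges --- precisely the picture in the definition of a smoothed polygon. Thus $K$ is a polygon rounded at its corners by hyperbolic arcs, i.e.\ a smoothed polygon.

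The main obstacle, and the place where a little care is needed, is the gluing argument: one must check that the finitely many rank-one pieces fit together so that their linear parts really do assemble into finitely many genuine straight edges and their hyperbolic parts into finitely many corner-smoothing arcs, rather than, say, accumulating infinitely many tiny hyperbolic pieces. Here I would use analyticity together with finiteness of the decomposition: since there are only finitely many analytic pieces and each is either a line segment or a strictly convex hyperbolic arc on its open interval, the classification of which even-index curve is the hyperbolic one is locally constant except at the finitely many breakpoints, so the maximal straight edges and the rounding arcs are finite in number. A secondary point is to confirm that the hexameral structure descends to a \emph{miserly} conclusion with the correct density formula; but this is immediate from Lemma~\ref{lemma:mid-min} and Lemma~\ref{lemma:8G}, since $K$ was assumed miserly to begin with and the smoothed-polygon conclusion is purely about the shape of the boundary curve. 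The remaining gap --- which the statement explicitly leaves open --- is that one still does not know the smoothed polygon is the smoothed \emph{octagon}, nor that piecewise analyticity may be dropped; these are flagged as future work and are not part of what is to be proved here.
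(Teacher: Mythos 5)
Your proof is correct and follows essentially the same route as the paper's own (two-sentence) argument: decompose the piecewise analytic boundary into finitely many analytic multi-curves of well-defined rank, rule out ranks zero, two, and three by the earlier lemmas and theorems, and conclude every piece has rank one. The extra gluing and asymptote discussion in your second and third paragraphs is harmless but not needed, since the paper \emph{defines} a smoothed polygon to be a hexameral domain whose boundary is a hyperbolic chain, i.e.\ a piecewise analytic rank-one multi-curve, so the conclusion is immediate once rank one is established everywhere.
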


\begin{proof}  
  In earlier sections, we have ruled out the existence of segments on
  the boundary of ranks zero, two, or three.  Thus, it must consist of
  segments of rank one.
\end{proof}

We consider a multi-curve that consists of a finite number of rank one
triples, joined one to another to form $C^1$ curves.  There is no
variational problem here, because there are no functional degrees of
freedom for segments of rank one.  When the an initial state for a
hyperbolic link is fixed, there is exactly one degree of freedom, the
parameter $\ta\in(0,1)$ in the square representation.

Suppose that we have a multi-curve $\sigma$ consisting of a finite
number of hyperbolic links.  Along each hyperbolic link, exactly one
of the three curves $\sigma_j$ follows a hyperbolic arc; the other two
are linear.  Set $\Delta=2\ring{Z}/6\ring{Z}$.  We break the domain of
the multi-curve into finitely many subintervals, each labeled with an
index $j\in\Delta$, according to which arc $\sigma_j$ is the
hyperbola.  The first link of $\sigma$ is entirely specified by
$(s_1,\ta_1,j_1)$, where $s_1\in S$ is an initial state,
$\ta_1\in(0,1)$ determines the length of the arc, and $j_1\in\Delta$
specifies the index of the hyperbolic arc.  The triple 
$(s_1,\ta_1,j_1)$ determines the terminal state $s_2\in S$ of the first
hyperbolic link.  Similarly, the second link of $\sigma$ is determined
by $(s_2,\ta_2,j_2)$, for some $\ta_2\in (0,1)$ and $j_2\in\Delta$.
Working through the hyperbolic chain, link by link, we obtain a
sequence
\begin{equation}\label{eqn:param}
  s_0\in S,\quad ((\ta_0,j_0),(\ta_1,j_1),\ldots,
(\ta_n,j_n)),\quad \ta_i\in(0,1),\quad j_i\in\Delta
\end{equation}
that uniquely determines the hyperbolic chain.

Conversely, given a sequence of parameters (\ref{eqn:param}), an
induction on $n$ shows that there is a unique hyperbolic chain with
those parameters.  We can extend the parameters $\ta\in (0,1)$ to
include $\ta=0$, with the understanding that this corresponds to a
degenerate link consisting of a single point.  If two consecutive
parameters $(\ta_i,j)$ and $(\ta_{i+1},j)$ have the same index
$j\in\Delta$, then they can be combined into a single hyperbolic link.
Thus, there is no loss in generality in assuming that consecutive
links carry distinct hyperbolic indices $j$.  In fact, we may insert
degenerate parameters $(\ta,j)$, with $\ta=0$ so that the parameters
take the special form
\begin{equation}\label{eqn:ji}
  (\ta_i,j_i),\hbox{ where } j_i = j_0 + 2 i.
\end{equation}

If we are given a hyperbolic chain $\sigma$ 
with parameters (\ref{eqn:param}),
we may extract the square representation $(a(i),t_0(i),t_1(i))$
of each
link $i$ from these parameters.  We may then sum Equation (\ref{eqn:I})
over the set of links to obtain the area
\begin{equation}\label{eqn:Isigma}
I(\sigma) = I(s_0,((u_0,j_0),\ldots)) = \sum_i I(a(i),t_0(i),t_1(i))
\end{equation}
represented by the entire chain.

\subsection{closed curves}

Consider a multi-curve $\sigma$ that gives the boundary of a hexameral
domain.  In the circle representation, write $\sigma_j(t) =
\phi(t)u^*_j$.  We have
\[
u^*_{j+1} = \rho u^*_j,\quad \rho=
\left(\begin{array}{ccc} \cos\theta & -\sin\theta\\ 
\sin\theta & \cos\theta\end{array}\right),\quad \theta=\pi/3.
\]
Let the initial point of the curve be given at time $t_0$ and let
$t_1>t_0$ be the first time at which $\sigma_0(t_1) = \sigma_1(t_0)$:
\begin{equation}\label{eqn:t0}
t_1 = \min \{ t \mid \sigma_0(t) = \sigma_1(t_0), \quad t>t_0\}.
\end{equation} 
Then for the boundary of the hexameral domain to be closed
and $C^1$, we must have
\begin{equation}\label{eqn:closed}
\phi(t_1) = \phi(t_0)\rho\quad\hbox{ and }\quad X(t_1) = X(t_0).
\end{equation}
The angular argument must also satisfy
\begin{equation}\label{eqn:arg}
0\le \arg (\phi(t_0)^{-1}\phi(t)u^*_0) \le \pi/3,\quad t\in[t_0,t_1].
\end{equation}

Given a choice $u$ of multi-point on the boundary of $K$, the parameters
$s\in S$ and $(\ta_i,j_i)$ are uniquely determined.  Conversely, the
parameters uniquely determine the boundary of $K$.

When we choose to do so, we may pick the starting multi-point on
the boundary of $K$ in such a way
that the first parameter is
\begin{equation}\label{eqn:j0}
(\ta_0,j_0) \hbox{ with } j_0=0.
\end{equation}  
We may also arrange
that the multipoint is the endpoint of a hyperbolic link.

\begin{definition}[link length]
Assume that the boundary of a hexameral domain is a hyperbolic
chain that satisfies conditions (\ref{eqn:j0}), (\ref{eqn:ji}).
Let $t_0,t_1$ be given as in (\ref{eqn:t0}).  We may extend
$\sigma_j$ to a periodic function $\rR\to\rR^2$.  We may extend
the parameters $(\ta_i,j_i)$ to all of $i\in\rZ$ with $j_i = 2i$.
The curve $\sigma_j$, restricted to $[t_0,t_1]$ has parameters
\[
((\ta_0,0),(\ta_1,2),\ldots,(\ta_n,2n)),
\]
for some $n$.  When $n$ is chosen to give the shortest representation
of this form, we call $n+1$ the {\it link length} of the hexameral
domain.
\end{definition}

For example, the boundary of the smoothed octagon contains eight
hyperbolic arcs, one at each corner of the octagon. They appear in
centrally symmetric pairs.  There exists a choice of initial
multi-point such that the smoothed octagon is by the following data:
\[
(I,[X])\in S,\quad ((\ta,0),(\ta,2),(\ta,4),(\ta,0)),
\]
for some $\ta>0$ that is independent of the link and some
$X\in\mathfrak{sl}_2(\rR)$.
The link length is $4$.

\begin{lemma}\label{lemma:link}  
  Let $K$ be a hexameral domain whose boundary is a hyperbolic chain.
  Let $n+1$ be the link length of $K$.  Then
\[n\equiv 0 \mod 3.\]
(In particular, the smoothed octagon minimizes the link length.)
\end{lemma}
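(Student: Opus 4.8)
The plan is to reduce the statement to a local claim about the \emph{last} link of the first period and then read off its hyperbolic index. Recall that along a rank-one multi-curve with hyperbolic index $j\in\Delta$ the curve $\sigma_j$ is a hyperbolic arc while $\sigma_{j+2}$ and $\sigma_{j+4}$ are line segments, and that with the normalizations (\ref{eqn:j0}), (\ref{eqn:ji}) the $i$-th link has index $j_i\equiv 2i\pmod 6$. In particular the last link of the first period --- link $n$ in the representation $((\ta_0,0),(\ta_1,2),\dots,(\ta_n,2n))$ of $\sigma_0|_{[t_0,t_1]}$ --- has index $j_n\equiv 2n\pmod 6$, so the assertion $n\equiv 0\pmod 3$ is precisely the assertion that $j_n=0$, i.e.\ that during link $n$ it is $\sigma_0$ itself, not $\sigma_2$ or $\sigma_4$, that traces a hyperbola. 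Thus it suffices to establish: (i) link $n$ is a genuine, non-degenerate hyperbolic link; and (ii) $\sigma_0$ runs along a non-degenerate hyperbolic arc on some interval $(t_1-\epsilon,t_1)$.

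Step (i) is immediate from the definition of link length: $n+1$ is the length of the \emph{shortest} representation of $\sigma_0|_{[t_0,t_1]}$ of the form (\ref{eqn:ji}), and if $\ta_n=0$ the last link would be a single point that could be deleted to produce a shorter representation of the same form, a contradiction; hence $\ta_n>0$ and link $n$ occupies a time interval of positive length ending at $t_1$. For step (ii) I would work at the vertex $P_1:=\sigma_1(t_0)$, which by (\ref{eqn:t0}) equals $\sigma_0(t_1)$. By (\ref{eqn:ji}) the link indexed $i=-1$ in the extended chain, the one immediately preceding link $0$, has index $j_{-1}\equiv 4\pmod 6$; and --- this is the one place where I use the freedom in the choice of starting multi-point --- taking the starting multi-point to be the terminal endpoint of a hyperbolic link (while keeping $j_0=0$, which a cyclic relabelling of the $u_j$ permits, such relabellings preserving the multi-point conditions (\ref{eqn:uA})) forces this link $i=-1$ to be non-degenerate. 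Then $\sigma_4$ traces a genuine hyperbolic arc of positive length ending at $t_0$, hence so does $\sigma_1=-\sigma_4$; that is, the boundary of $K$ is a non-degenerate hyperbolic arc on the side of $P_1$ from which $\sigma_1$ approaches. Since $\sigma_0$ and $\sigma_1$ parametrize the boundary in the same (counterclockwise) sense, $K$ has no corners, and $\sigma_0(t_1)=P_1=\sigma_1(t_0)$, the curve $\sigma_0$ approaches $P_1$ from the same side, hence lies along this hyperbolic arc for all $t$ in some interval $(t_1-\epsilon,t_1)$; this is (ii).

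Combining (i) and (ii): for $t\in(t_1-\epsilon,t_1)$ with $\epsilon$ small, $t$ lies in the positive-length time interval of link $n$, yet $\sigma_0(t)$ lies on a non-degenerate hyperbolic arc and so cannot lie on a line segment; therefore the hyperbola during link $n$ is $\sigma_0$, i.e.\ $j_n=0$, i.e.\ $n\equiv 0\pmod 3$. For the parenthetical one notes in addition that $n=0$ cannot occur: along a hyperbolic link the matrix $X(t)$ is non-constant (one of its normalized entries is strictly monotonic in the square representation), whereas (\ref{eqn:closed}) forces $X(t_1)=X(t_0)$, so a first period consisting of a single hyperbolic link is impossible; hence $n\geq 3$, and the value $n=3$ is realized by the smoothed octagon. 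I expect the only real subtlety to be the bookkeeping with degenerate ($\ta_i=0$) links --- in particular verifying that ``the multi-point is the endpoint of a hyperbolic link'' genuinely makes the link $i=-1$ (rather than merely link $0$) non-degenerate --- together with making the counterclockwise comparison of $\sigma_0$ and $\sigma_1$ near $P_1$ fully rigorous, which it is because $K$ has no corners and each $\sigma_j$ is therefore a locally injective parametrization of the boundary.
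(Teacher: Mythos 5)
Your reduction of the congruence to the local claim that $\sigma_0$ traces a genuine hyperbolic arc on $(t_1-\epsilon,t_1)$ is a genuinely different route from the paper's, and steps (i) and the final deduction are sound. The gap is in step (ii), at exactly the point you flag as the subtlety: choosing the starting multi-point to be the terminal endpoint of a hyperbolic link does \emph{not} force the link at position $i=-1$ of the normalized indexing (\ref{eqn:ji}) to be non-degenerate. That choice guarantees only that \emph{some} non-degenerate link terminates at $t_0$; its hyperbolic index lies in $\{2,4\}$ (it cannot be $0$, else it would merge with link $0$ across $t_0$), and it sits at position $-1$ (index $4$) only in the first case --- if its index is $2$ it sits at position $-2$ and position $-1$ is filled by a degenerate link. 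In that case $\sigma_4$, hence $\sigma_1=-\sigma_4$, is a straight segment just before $t_0$, the boundary passes through $P_1$ along a line, and your local argument yields nothing. Worse, this is not a fringe case: by the periodicity forced by (\ref{eqn:closed}) one has $\sigma_j(t)=\sigma_{j-1}(t+T)$ with $T=t_1-t_0$, so the link terminating at $t_0$ is link $n$ of the previous period and its index is $j_n-2=2n-2\pmod 6$; hence ``index $4$'' is \emph{equivalent} to $2n\equiv 0$, i.e.\ to the conclusion $n\equiv 0\pmod 3$. The load-bearing hypothesis of your argument is therefore equivalent to the statement being proved, and the case $n\equiv 2\pmod 3$ (boundary straight on both sides of $P_1$) is never excluded.

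For contrast, the paper runs the periodicity globally rather than locally: since $\phi(t_1)=\phi(t_0)\rho$ and $\rho$ cyclically permutes the $u^*_j$, the first link of the second period is link $0$ reappearing with $\ta_{n+1}=\ta_0>0$ and hyperbolic index $j_0+2=2$, while the normalized indexing forces that index to be $2(n+1)$; the consistency $2n+2\equiv 2$ in $\Delta=2\rZ/6\rZ$ gives $n\equiv 0\pmod 3$ at once. To salvage your local version you would have to exclude independently the possibility that the boundary is linear on both sides of $P_1$, and I do not see how to do that without reintroducing the shift-by-$2$-per-period computation, at which point the detour through $P_1$ is superfluous. Your observation that $n=0$ is impossible because $X$ is non-constant along a non-degenerate link while (\ref{eqn:closed}) demands $X(t_1)=X(t_0)$ is correct and supplies a detail the paper's parenthetical leaves implicit.
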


\begin{proof} If we consider the multi-curve $\sigma$, the link
$(\ta_{n+1},2n+2)$ lies along the same part of the multi-curve as $(\ta_0,0)$,
so that $\ta_{n+1}=\ta_0$.  However, the hyperbolic index shifts by
$1$ as we pass from $\sigma_0$ to $\sigma_1$.  Therefore,
\[(\ta_{n+1},2n+2) = (\ta_0,2)\in \rR\times \Delta.\]
The congruence 
\[2n+2 \equiv 2\in \Delta = 2\rZ/6\rZ\]
gives the result.
\end{proof}

\subsection{link reduction}

With this background in place, we now return to a discussion of the
Reinhardt conjecture.  Lemma~\ref{lemma:link} shows that the
conjectural solution to the Reinhardt problem minimizes link
length.  This leads to the intuition 
that decreases in $\deltalat(K)$ should have
the effect of shortening the link length.  This is the motivation
for the Conjecture~\ref{conj:defrag}, which asserts that we may
simultaneously decrease areas and eliminate links
from some hyperbolic chains.   

We consider all possible hyperbolic chains $\sigma$ with the same
initial and terminal states $s_0,s_1$.  These chains are given by
parameters
  \[
  s_0,\quad ((\ta_1,j_1),\ldots).
  \]
  Since $s_1$ lies in a five-dimensional space, the terminal state
  places five constraints on the parameters set
  $(\ta_1,\ta_2,\ldots)$.  By counting dimensions, we might guess that
  for generic parameters $s_0,s_1$, the terminal state cuts out a set
  of hyperbolic chains of codimension five.  Generically, it should
  take at least five links to match the terminal state $s_1$.

If, instead of fixing both endpoints, we may impose the closed
curve condition (\ref{eqn:closed}). We may use the action of
the special linear group to force $\phi(t_0)=I$.  The free
parameters are $X$ and $(\ta_0,\ldots,\ta_n)$, or $n+3$ free
variables.

\begin{conj}[Link Reduction] \label{conj:defrag} Let $K$ be a
  hexameral domain with multi-curve $\sigma$ around the boundary.  Let
  $t_0,t_1$ be the parameters (\ref{eqn:t0}).  Suppose that a portion
  $[t'_0,t'_1]$ of the boundary (with $t_0\le t'_0\le t'_1\le t_1$) is
  a hyperbolic chain with six links, given in the form
  (\ref{eqn:param}).  (We do not assume (\ref{eqn:j0}),
  (\ref{eqn:ji}).)  Let $s_0,s_1\in S$ be the initial and terminal
  states for the curve at $t_0'$ and $t_1'$.  Then there is
  another hyperbolic chain with five links that
\begin{itemize}
\item fits the same initial and terminal states $s_0,s_1\in S$,
\item satisfies the angle condition (\ref{eqn:arg}),
\item has no greater area $I(\sigma)$ as defined by (\ref{eqn:Isigma}),
\item and in fact has strictly smaller area, unless the chain is already (degenerately) a 
five-link chain.
\end{itemize}
\end{conj}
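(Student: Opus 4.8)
This statement is posed as a conjecture, so what follows is a plan rather than a complete proof.  The idea is to convert the assertion into a finite-dimensional constrained optimization problem and to attack it by descent along the fibers of the endpoint map.  Fix the initial state $s_0\in S$; it determines the hyperbolic index $j_0$ of the first link, and in the canonical parametrization of Equations~(\ref{eqn:param})--(\ref{eqn:ji}) every hyperbolic chain of at most six links starting at $s_0$ is encoded by a point $\ta=(\ta_1,\dots,\ta_6)\in[0,1]^6$, with $j_i=j_0+2i$ forced and $\ta_i=0$ meaning that the $i$-th link is degenerate.  Building the chain link by link from the square representation, both the terminal state $F(\ta)=F_{s_0}(\ta)\in S$ and the total area $I(\ta)$ (from (\ref{eqn:Isigma})) are continuous, and real-analytic on strata, functions of $\ta$ on the compact cube: the link parameters $a(i),t_0(i),t_1(i)$ stay in the ranges~(\ref{eqn:atu}) and vary continuously, and a degenerate link contributes zero area.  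The conjecture is then equivalent to the statement that every minimizer $\ta^\circ$ of $I$ on the compact set $\mathcal C(s_0,s_1)=\{\ta\in[0,1]^6: F(\ta)=s_1,\ \ta\text{-chain satisfies }(\ref{eqn:arg})\}$ has $\ta^\circ_i=0$ for some $i$, since such a $\ta^\circ$ represents a chain of at most five genuine links whose area is no larger than that of any prescribed six-link chain with the same endpoint states, and strictly smaller unless that chain already had a vanishing coordinate.

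The central step I would try to establish is an infinitesimal non-degeneracy property: at every interior point $\ta\in(0,1)^6$ at which all six links are non-degenerate, $dI(\ta)$ does not lie in the span of $dF_1(\ta),\dots,dF_5(\ta)$.  Expanding $dI$ and $dF$ through the first variation of~(\ref{eqn:I}) in the terminal parameter together with the derivative on $S$ of the single-link flow, this becomes an explicit (if lengthy) computation, analogous in spirit to the non-vanishing first-variation computations already carried out for ranks two and three, but now performed in the finite-dimensional chain parameters rather than in function space.

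Granting this non-degeneracy, the consequences follow by standard finite-dimensional arguments.  At an interior minimizer the Lagrange condition $dI=\sum\lambda_k dF_k$ would have to hold, which is excluded; moreover $dI$ restricted to the fiber tangent space $\ker dF$ is nonzero, so $I$ is strictly monotone along each smooth fiber component, such a component is an arc and not a circle, and following the decreasing direction from the given six-link chain one reaches a point $p\in\partial[0,1]^6$ with $I(p)\le I(\ta^*)$ and some coordinate $p_i\in\{0,1\}$.  The angle condition~(\ref{eqn:arg}), being closed and satisfied strictly in the interior of a non-degenerate chain, persists along this deformation and is inherited by the shortened chain.  The configurations at which $dF$ drops rank should be dealt with by perturbing $s_0$ and $s_1$ slightly, reducing the perturbed chain, and passing to the limit using compactness and the continuity of $I$ and $F$.

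The main obstacle is the remaining boundary case $p_i=1$, a ``maxed-out'' link at which the balanced hexagon of that link degenerates to a quadrilateral.  Here the descent stalls: the face $\{\ta_i=1\}$ has dimension five, and the constraint $F=s_1$ already pins the minimizer down within it, leaving no one-parameter family along which to keep decreasing the area.  Iterating onto smaller faces merely pushes the difficulty to a vertex of the cube at which every link is either a point or maximal.  To finish the proof one must show that a chain containing a maximal, hexagon-degenerate link cannot be area-minimizing, or can be re-routed through a different hyperbolic index with no greater area, and this appears to require genuine understanding of the geometry of the degenerating quadrilateral configurations rather than a formal deformation argument.  I expect this, together with the explicit non-degeneracy computation of the second paragraph, to be the crux, and it is presumably why the statement is left as a conjecture; a more global alternative, comparing the images of the five- and six-link endpoint maps directly, looks less promising because that map is a composition of flows in $SL_2(\rR)$ with no evident closed form.
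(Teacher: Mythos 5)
The paper offers no proof of this statement: it is posed as an open conjecture, supported only by the dimension count that the terminal state in the five-dimensional space $S$ generically cuts the space of chain parameters down to codimension five, together with the remark that the problem ``may be explored by computer.'' So there is nothing in the paper to compare your argument against, and you are right to present a strategy rather than a proof. Your reduction to a finite-dimensional constrained optimization with endpoint map $F:\ta\mapsto s_1$ and descent along the one-dimensional fibers of $F$ is a sensible sharpening of the paper's heuristic, and the proposed non-degeneracy condition ($dI$ not in the span of $dF_1,\dots,dF_5$ at interior points) is exactly the kind of statement one would need; but it is not verified, and the $\ta_i=1$ boundary case you flag is a genuine obstruction, so the proposal does not close the conjecture.

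Beyond the gaps you name yourself, two points in the setup need repair. First, the conjecture explicitly does not assume the normalization (\ref{eqn:ji}), so the six links carry an arbitrary sequence of hyperbolic indices $j_1,\dots,j_6\in\Delta$ with consecutive indices distinct (each step being $+2$ or $+4$ in $2\rZ/6\rZ$); forcing $j_i=j_0+2i$ by inserting degenerate links can inflate a six-link chain to as many as eleven slots, so the configuration space is a disjoint union of cubes indexed by admissible index sequences rather than a single $[0,1]^6$, and your descent must be carried out on each stratum (or the reduction target ``five links'' must be interpreted in the unnormalized count). Second, the initial state $s_0=(\phi(t_0),[X(t_0)])$ does not determine $j_0$: the tangent data at a single instant is compatible with any of the three curves becoming the hyperbolic arc, which is why the paper records $j_0$ as a separate discrete datum in (\ref{eqn:param}). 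Finally, your claim that the angle condition (\ref{eqn:arg}) ``persists along this deformation'' is asserted, not proved; since it is a closed condition, the descent path could hit its boundary before reaching a face of the cube, and that stall would need the same kind of geometric analysis as the $\ta_i=1$ case.
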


In other words, the conjecture claims that one of the links can
be removed, decreasing the number of links to five, while
simultaneously decreasing area.

The conditions on the parameters $t_0,t_1,t'_0,t'_1$ are there to
insure that the hyperbolic chain is short enough that the
corresponding curves $\sigma_0,\ldots,\sigma_5$ parametrize distinct
portions of the boundary.

The condition that the hyperbolic chain should bound part of a hexameral
domain places constraints on $s_0,s_1$.  They cannot be arbitrary
states of $S$.

The number of parameters in the implied optimization is eight: six
links and the five-dimensional initial state, reduced by the three
dimensional group $SL_2(\rR)$.  This conjecture may be explored by
computer, but I have only done so in a very limited way.

\bigskip

The following is a very special case of the Reinhardt conjecture.

\begin{conj}[Five Link]\label{conj:5} 
  Let $K$ be a hexameral domain with multi-curve $\sigma$ around the
  boundary.  Let $t_0,t_1$ be the parameters (\ref{eqn:t0}).  Suppose
  that the entire non-repeating boundary $[t_0,t_1]$ is a hyperbolic
  chain with five links.  Then $\deltalat(K)= \delt$ exactly when $K$
  is the smoothed octagon, up to a transformation by $SL_2(\rR)$.
\end{conj}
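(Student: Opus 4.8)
The plan is to reduce Conjecture~\ref{conj:5} to a nonlinear optimization in finitely many real variables and to show that the smoothed octagon is its unique solution. First I would coordinatize the hexameral domains whose non-repeating boundary $[t_0,t_1]$ is a hyperbolic chain with at most five links. By the square representation and the parametrization (\ref{eqn:param})--(\ref{eqn:atu}), and after normalizing by $SL_2(\rR)$ so that $\phi(t_0)=I$, such a chain is determined by the direction $[X(t_0)]\in\rP(\mathfrak{sl}_2(\rR))$, the five link lengths $\ta_0,\ldots,\ta_4\in[0,1)$, and the discrete indices $j_i\in\Delta$. For the chain to close into the boundary of a hexameral domain one imposes (\ref{eqn:closed}) --- which after the normalization reads $\phi(t_1)=\rho$ and $X(t_1)=X(t_0)$ --- together with the angle condition (\ref{eqn:arg}); by Lemma~\ref{lemma:link} the standard link length of a closed hyperbolic chain is $\equiv 1 \mod 3$, so only finitely many index patterns $(j_0,\ldots,j_4)$ are admissible, and for each the closed-curve condition cuts the parameters down to a real-analytic family of low dimension. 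The smoothed octagon occupies the fully symmetric point $\ta_0=\ta_1=\ta_2=\ta_3$ of the four-link family (its fifth link degenerate). On the union $P$ of these parameter families the density is, by Lemma~\ref{lemma:8G} and (\ref{eqn:density}) with the normalization $\aa(G)=\sqrt{12}$, the explicit function $\deltalat(K)=\aa(K)/\sqrt{12}$ obtained by summing (\ref{eqn:I}) over the links, and Conjecture~\ref{conj:5} becomes the assertion that this function attains its minimum on $P$ exactly at the smoothed octagon.

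Second, I would show that $P$ has a compact closure on which the density extends continuously. The conditions (\ref{eqn:atu}) force $a(i)\ge\sqrt3/2$ and confine each $t_0(i)$ to $(-1,k(i)-1)$; I would check that (\ref{eqn:closed}), (\ref{eqn:arg}), and the requirement that the balanced hexagon not degenerate to a parallelogram (Lemma~\ref{lemma:parallel}) keep all parameters in a closed bounded region. At the boundary of that region a link collapses, the balanced hexagon degenerates to a quadrilateral, or some $a(i)\to\infty$; in each limit I would verify directly from (\ref{eqn:I}) that the density exceeds its value at the smoothed octagon. (The degenerate stratum of standard link length one is excluded by Lemmas~\ref{lemma:221} and~\ref{lemma:parallel}, or again by this estimate.) On the resulting compact set the density then attains a minimum, and since $\aa(K)$ is a smooth function of the parameters, any minimizer in the interior is a critical point of the area.

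Third --- the heart of the argument --- I would prove that the smoothed octagon is the \emph{only} critical point on $P$, hence the unique global minimum. That it is a critical point follows from (\ref{eqn:I}) via the symmetry $\ta_0=\ta_1=\ta_2=\ta_3$, or from Nazarov's local optimality theorem \cite{Nazarov}, which also supplies a strictly positive second variation there. To promote this to global uniqueness I would solve the constraint (\ref{eqn:closed}) for free coordinates on each piece of $P$ and then show the summed area (\ref{eqn:I}), in those coordinates, is strictly convex --- or at least strictly unimodal with the smoothed octagon as its only critical point; an attractive sub-goal is a monotonicity estimate showing that, with the endpoint data fixed, spreading the link lengths $\ta_i$ apart strictly increases $\sum_i I(a(i),t_0(i),t_1(i))$. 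Granting this, every hexameral domain $K$ bounded by a five-link chain satisfies $\deltalat(K)\ge (8-\sqrt{32}-\ln 2)/(\sqrt{8}-1)$, with equality only for the smoothed octagon; combined with the existence of a miserly domain --- which, in the context of the surrounding program, is a hexameral domain bounded by such a short hyperbolic chain --- this gives $\delt=(8-\sqrt{32}-\ln 2)/(\sqrt{8}-1)$ and the equivalence claimed in Conjecture~\ref{conj:5}.

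The global uniqueness step is the main obstacle; it is precisely the ``nonlinear optimization problem in a small number of variables'' of the introduction, and a rigorous treatment would most plausibly be computer-assisted. The first variation vanishes and the second variation is favorable at the smoothed octagon, but eliminating all \emph{other} critical points demands good control of the transcendental constraint (\ref{eqn:closed}), especially of the implicit dependence of $t_1$ and of the per-link square-representation data on the free parameters, across each combinatorial type of chain. Secondary difficulties are the exact compactification of $P$ --- accounting for every degeneration of a short closed hyperbolic chain that keeps it a convex curve --- and verifying that the constraint surface is smooth near the smoothed octagon, so that the Lagrange-multiplier analysis is legitimate.
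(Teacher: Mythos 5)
This statement is a conjecture in the paper: no proof is given, and the author explicitly leaves it as an open nonlinear optimization problem, to be attacked ``by computer, and perhaps even \ldots with interval arithmetic.'' Your proposal reproduces, in more detail, the same reduction the paper already sets up --- normalize by $SL_2(\rR)$ so that $\phi(t_0)=I$, parametrize the closed five-link chains by the initial direction $[X(t_0)]$ and the link lengths $\ta_i$ subject to the closure condition (\ref{eqn:closed}) and the angle condition (\ref{eqn:arg}), and minimize the sum of the link areas (\ref{eqn:I}) over the resulting seven-dimensional family. That framing is correct, but it is not a proof. The two load-bearing steps --- (i) that the admissible parameter set has compact closure and that the density exceeds the octagon's value on every degenerate stratum, and (ii) that the area functional has no critical point on the admissible set other than the fully symmetric one --- are stated as sub-goals and never established. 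Step (ii) is exactly the open content of the conjecture: ``strict convexity in suitable coordinates'' and the ``spreading monotonicity estimate'' are plausible guesses, but you give no reason to believe either holds for the transcendental function obtained by composing (\ref{eqn:I}) with the implicit solution of (\ref{eqn:closed}), and Nazarov's theorem controls only a neighborhood of the octagon.

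There is also a structural overreach in your final paragraph. The conclusion of the conjecture involves $\delt$, the infimum of $\deltalat$ over \emph{all} centrally symmetric convex domains, not merely over five-link chains. To pass from ``the octagon uniquely minimizes density among five-link hexameral domains'' to ``$\deltalat(K)=\delt$ exactly for the octagon,'' you invoke the premise that some miserly domain is bounded by a five-link hyperbolic chain. That premise is not available: the paper shows only that a \emph{piecewise-analytic} miserly domain is a smoothed polygon with some unspecified number of links; reducing to five links is precisely Conjecture~\ref{conj:defrag}, itself open, and removing the piecewise-analyticity hypothesis is a further unresolved step. So even granting your steps (i) and (ii), the argument would prove only the restricted optimization statement, not the conjecture as phrased. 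In short, the proposal is a sensible research plan, essentially identical to the program the paper outlines, but it supplies no new idea that closes the gap the paper identifies.
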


The smoothed octagon belongs to this family of hexameral
domains, with parameters $\ta_3=0$ and $\ta_i=\ta_j$, if $i,j\ne 3$.

Nazarov's proof of the local optimality of the smoothed octagon
gives the conjecture for hexameral domains sufficiently close
to the smoothed octagon \cite{Nazarov}.

This is an optimization problem on a seven dimensional space.  (There
is the five dimensional initial state $s\in S$ and five links
$(\ta_i,j_i)$, reduced by the action of the three-dimensional group
$SL_2$.)  For a generic choice of parameters $s,\{(\ta_i,j_i)\}$ the
hyperbolic chain will not form a simple closed curve, and can be
discarded.

Again, we might hope to test this conjecture by computer, and perhaps
even to prove it with interval arithmetic.

\begin{lemma}\label{lemma:smooth}  
  Assume Conjectures~\ref{conj:defrag} and~\ref{conj:5}.  Then up to
  affine transformation, the smoothed octagon uniquely minimizes
  $\deltalat(K)$ over the class of all smoothed polygons $K$.
\end{lemma}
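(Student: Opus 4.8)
The plan is to argue by induction on the link length $n+1$ of a smoothed polygon $K$, using Lemma~\ref{lemma:link} to restrict attention to the case $n\equiv 0\pmod 3$, and using Conjecture~\ref{conj:defrag} as the inductive step that lowers the link count while weakly decreasing area. The base of the induction is handled by Conjecture~\ref{conj:5}: when $n+1=5$ (equivalently $n=4$, which is \emph{not} $\equiv 0 \bmod 3$ — so I must be careful here), or rather when the boundary is a five-link hyperbolic chain in the sense of Conjecture~\ref{conj:5}, the unique minimizer is the smoothed octagon. So the real structure is: reduce an arbitrary smoothed polygon down, six links at a time via Link Reduction, to a five-link chain, then invoke the Five Link conjecture.

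Concretely, first I would set up the parametrization. Let $K$ be a smoothed polygon; by Lemma~\ref{lemma:link} its link length $n+1$ satisfies $n\equiv 0 \pmod 3$, and by the discussion following Equation~(\ref{eqn:param}) we may normalize by $SL_2(\rR)$ and choose the starting multi-point so that the boundary on $[t_0,t_1]$ is described by parameters $s_0\in S$ together with $((\ta_0,j_0),\ldots,(\ta_n,j_n))$. Next, if $n+1 > 5$, I would locate within this chain a consecutive sub-block of six links; its initial and terminal states $s_0',s_1'\in S$ are determined by the chain, and the sub-block bounds a portion of the hexameral domain $K$, so the hypotheses of Conjecture~\ref{conj:defrag} are met (the condition $t_0\le t_0'\le t_1'\le t_1$ holds because we work inside the non-repeating portion of the boundary). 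Link Reduction then replaces this six-link block by a five-link block with the same endpoints $s_0',s_1'$, satisfying the angle condition~(\ref{eqn:arg}), and with area $I(\sigma)$ not larger — strictly smaller unless the block was already degenerately five links. Splicing the new block back in yields a smoothed polygon $K'$ with link length reduced by one (hence $\deltalat(K') \le \deltalat(K)$ via Formula~(\ref{eqn:density}), since areas of $K$ and of the balanced hexagons are controlled by $I$ and the normalization $\aa(G)=\sqrt{12}$), with strict inequality unless no reduction was possible. Iterating drives the link length down to $5$ while never increasing $\deltalat$.

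At link length $5$, Conjecture~\ref{conj:5} identifies the smoothed octagon as the unique domain in that family with $\deltalat(K)=\delt$. Combining the two facts: for any smoothed polygon $K$ we have produced a chain of inequalities $\deltalat(K)\ge \deltalat(K_{(1)})\ge\cdots\ge \deltalat(K_{(5\text{-link})})\ge \delt$, with equality throughout forcing, at the last step, $K_{(5\text{-link})}$ to be the smoothed octagon, and at each earlier step forcing the reduced block to have been degenerately five links already — so that $K$ itself was (up to reparametrization and insertion of degenerate links $\ta=0$) already a five-link chain, hence already the smoothed octagon up to $SL_2(\rR)$. This gives uniqueness. The affine-transformation clause is immediate since $SL_2(\rR)$ together with scaling generates all area-preserving-up-to-scalar affine maps and $\deltalat$ is affine-invariant.

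The main obstacle is genuinely bookkeeping rather than mathematical depth, given that both conjectures are assumed: I must check that Link Reduction can always be \emph{applied}, i.e. that an arbitrary smoothed polygon of link length $>5$ actually contains a six-link sub-block whose endpoints $s_0',s_1'$ are valid states arising from a hexameral domain (the remark after Conjecture~\ref{conj:defrag} warns that $s_0,s_1$ cannot be arbitrary), and that after splicing, the resulting curve is still a genuine simple closed $C^1$ boundary of a hexameral domain — the angle condition~(\ref{eqn:arg}) in the conjecture is exactly what is needed for this, but one must verify the global simple-closed-curve property survives the local surgery. A secondary subtlety is tracking the equality case carefully enough to extract \emph{uniqueness} (not just the value of the minimum), which requires that the ``strictly smaller area unless already degenerate'' clause of Conjecture~\ref{conj:defrag} and the uniqueness clause of Conjecture~\ref{conj:5} be threaded through the induction without loss.
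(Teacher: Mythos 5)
Your proposal follows essentially the same route as the paper, whose entire proof is two sentences: Conjecture~\ref{conj:defrag} successively reduces the number of links to five, and Conjecture~\ref{conj:5} then handles the five-link case (degenerate cases with fewer links included). Your version simply makes explicit the bookkeeping the paper leaves implicit --- the splicing, the equality-case threading needed for uniqueness, and the applicability of Link Reduction to a sub-block --- all of which is consistent with the intended argument.
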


\begin{proof} The first conjecture successively reduces the number of links to
  five.  The second conjecture treats the case of five links (which
  includes as degenerate cases, fewer than five links).
\end{proof}

\section{Bolza}

Viewed as a problem in the calculus of variations or control theory,
the Reinhardt problem is an instance of the classical problem of Bolza
with nonholonomic inequality constraints, autonomous, fixed endpoints,
and no isoperimetric constraints.  Lacking isoperimetric constraints,
it is an instance of the classical problem of Mayer with inequality
constraints~\cite[Ch.7]{Hestenes:1966}.

We have established the existence of a minimizer with Lipschitz
continuous derivative (when considered as a second-order system; the
minimizer itself is Lipschitz continuous when converted to a
first-order system).  This is sufficiently regular to match the
hypotheses in standard treatments of the subject, such as~\cite{Pontryagin:1986}.

We search for minimizers of the integral (\ref{eqn:area-int}):
\begin{equation}
I(\phi) = 3\int_{t_0}^{t_1} (\alpha d\gamma - 
\gamma d\alpha) + (\beta d\delta - \delta d\beta).
\end{equation}
over the class of curves $\phi:[t_0,t_1]\to SL_2(\ring{R})$ such that 
\begin{itemize}
\item the special linear condition holds: $\alpha\delta-\beta\gamma=1$, where
 \[
\phi(t) = \phi(t_0)
\left(\begin{matrix}\alpha&\beta\\\gamma&\delta\end{matrix}\right).
\]
\item $\phi$ is $C^1$
\item The derivative $\phi'$ is Lipschitz continuous.
\item Convexity constraints hold:
  \[\sigma_j'(t) \land \sigma_j''(t) \ge 0,\quad j=0,2,4,\quad t\in[t_0,t_1]~a.e.\]
where $u^*_j$ is as in (\ref{eqn:roots}) and $\phi(t_0)\sigma_j(t) = \phi(t) u^*_j$.
\end{itemize}

The variational problem has several symmetries, which lead to
conserved quantities by Noether's theorem.  The convexity
constraints and area are invariant under reparametrization of
$\phi:[t_0,t_1]\to SL_2(\ring{R})$.  The problem is autonomous.
Finally, the group $SL_2(\ring{R})$ acts on the set of minimizers.

We may convert to a first order problem by setting $\phi(t_0)Y =
\phi'$.  The convexity constraints become linear in $Y'$:
\begin{equation}\label{eqn:curvature}
  y_j \land y_j' \ge 0,
\end{equation}
where $y_j = Y u^*_j$.
%\end{proof}

\raggedright

\bibliographystyle{plain} %plainnat

%\bibliography{/Users/thomashales/Desktop/googlecode/flyspeck/latex/bibliography/all}

\end{document}